\documentclass[11pt]{amsart}

\usepackage[T1]{fontenc}
\usepackage[utf8]{inputenc}
\usepackage{lmodern}
\usepackage{amsmath,amssymb,amsthm,mathtools}
\usepackage{mathrsfs}
\usepackage{enumitem}
\usepackage{graphicx}
\usepackage{xcolor}
\usepackage{hyperref}
\usepackage[nameinlink,capitalize,noabbrev]{cleveref}
\usepackage{booktabs}
\usepackage{array}
\usepackage{longtable}
\usepackage{caption}
\usepackage{subcaption}
\usepackage{microtype}
\usepackage[section]{placeins}
\allowdisplaybreaks
\graphicspath{{figures/}}

\hypersetup{
  colorlinks=true,
  linkcolor=blue!50!black,
  citecolor=blue!50!black,
  urlcolor=blue!50!black,
  pdftitle={The Partition Graph as a Growing Discrete Geometric Object},
  pdfauthor={Fedor B. Lyudogovskiy}
}

\theoremstyle{plain}
\newtheorem{theorem}{Theorem}[section]
\newtheorem{proposition}[theorem]{Proposition}
\newtheorem{lemma}[theorem]{Lemma}

\newtheorem{problem}[theorem]{Problem}
\newtheorem{question}[theorem]{Question}
\newtheorem{heuristic}[theorem]{Heuristic Principle}

\theoremstyle{definition}
\newtheorem{definition}[theorem]{Definition}
\newtheorem{remark}[theorem]{Remark}

\newtheorem{observation}[theorem]{Observation}

\newcommand{\Cl}{\mathrm{Cl}}
\newcommand{\SC}{\mathrm{SC}}
\newcommand{\Spine}{\mathrm{Sp}}
\newcommand{\dist}{\operatorname{dist}}
\newcommand{\locdim}{\dim_{\mathrm{loc}}}
\newcommand{\deglayer}{D}
\newcommand{\simpLayer}{L}
\newcommand{\axdist}{d_{\mathrm{ax}}}

\newcommand{\Gn}{G_n}
\newcommand{\Kn}{K_n}
\newcommand{\Mn}{M_n}
\newcommand{\Ln}{L_n}
\newcommand{\Rn}{R_n}
\newcommand{\Fn}{F_n}
\newcommand{\Cn}[1]{\mathcal{C}_n^{(#1)}}

\newcommand{\glossentry}[2]{%
  \par\medskip
  \noindent\textbf{#1}\par
  #2\par
}

\title{The Partition Graph as a Growing Discrete Geometric Object}
\author{Fedor B. Lyudogovskiy}
\subjclass[2020]{05A17, 05C75, 05E10}
\keywords{integer partitions, partition graph, Ferrers diagram, discrete geometry, graph morphology, self-conjugate partitions, clique complex, local structure, central region, spine}

\begin{document}

\begin{abstract}
For each positive integer \(n\), let \(\Gn\) be the graph of integer partitions of \(n\), where two partitions are adjacent if one is obtained from the other by an elementary transfer of a cell in the Ferrers diagram, followed by reordering. Previous work has studied the global homotopy type of the clique complex \(\Kn=\Cl(\Gn)\) and the local combinatorics of the graph at a fixed vertex. This paper initiates the study of \(\Gn\) itself as a growing discrete geometric object.

Primarily foundational in character, the paper introduces a structural language for the large-scale morphology of partition graphs, including the antenna vertices, main chain, boundary framework, self-conjugate axis, simplex layers, degree landscape, central region, and spine. Using local invariants from the companion local theory---especially degree and local simplex dimension---it defines canonical vertex layerings of \(\Gn\).

A small computational atlas for \(1\le n\le 12\) illustrates the first visible emergence of framework, interior, axial organization, and central skeletal structure. The paper provides a vocabulary, a family of canonical constructions, and a first small-range developmental picture for later quantitative, asymptotic, and theorem-centered work.
\end{abstract}

\maketitle

\section{Introduction}

Graphs on integer partitions arise when partitions are viewed not as isolated combinatorial objects but as vertices connected by elementary transformations. In the partition graph \(\Gn\), the vertices are the partitions of \(n\), and two vertices are adjacent when one partition is obtained from the other by a single elementary transfer of a cell in the Ferrers diagram, followed by reordering. This graph model is natural from several points of view. It is closely related to classical transfer operations on partitions and to minimal-change constructions in the theory of combinatorial Gray codes, but it also has intrinsic geometric interest of its own: it endows the set of partitions of \(n\) with the structure of a finite graph that exhibits visible internal organization \cite{LyuHomotopy,LyuLocal,Savage,RasmussenSavageWest,Mutze}.

Two scales of structure in this graph have already been studied. In one direction, the clique complex
\[
\Kn:=\Cl(\Gn)
\]
was analyzed from a global topological point of view, and it was shown that \(\Kn\) is always homotopy equivalent to a wedge of \(2\)-spheres \cite{LyuHomotopy}. That work also highlighted a striking qualitative contrast: although the local simplex structure of \(\Kn\) becomes arbitrarily complicated as \(n\) grows, its global homotopy type remains that of a wedge of \(2\)-spheres \cite{LyuHomotopy}.

In another direction, the local combinatorics of \(\Gn\) at a fixed vertex \(\lambda\) was described in terms of admissible transfers, a canonical local bipartite graph \(B(\lambda)\), the induced neighborhood graph, degree, local clique structure, and local simplex dimension. In particular, these local invariants were shown to depend only on a simple ordered binary datum attached to the support pattern of the partition. The companion local paper explicitly presents this as the first local layer of a broader program and points toward later study of larger-scale structures, central regions, core-like subgraphs, and growth phenomena as \(n\) varies \cite{LyuLocal}.

This paper concerns a third scale, intermediate between those two viewpoints. It studies neither the global homotopy type of \(\Kn\) nor the local structure at a single vertex, but the graph \(\Gn\) itself as a growing discrete geometric object. Its goal is to describe the large-scale and mesoscopic morphology of partition graphs: their extremal framework, symmetry axis, local-complexity layerings, central body, and axial skeletal structure.

A simple observation motivates this viewpoint: although \(\Gn\) is defined combinatorially, it has clear internal structure. For \(n\ge 2\) the graph has two distinguished degree-one vertices, called the antenna vertices; for \(n=1\), the unique vertex plays the corresponding degenerate extremal role. They are joined by a canonical shortest hook-like path, the main chain. From this chain branch the two lateral extremal paths, called the left and right edges.

The graph also admits a natural involutive symmetry given by conjugation of partitions; its fixed-point set is the self-conjugate axis. Around that axis, as \(n\) grows, one begins to see the emergence of a central region, a richer local simplex structure, a more varied degree landscape, and an axial scaffold called the spine.

The paper is primarily foundational and descriptive rather than exhaustive. It does not attempt to settle later questions about degree growth, support phenomena, jump-type invariants, centrality, or asymptotic behavior. Instead, it introduces a structural language in which such questions can be formulated naturally. In particular, it uses local invariants imported from the companion local theory---especially degree and local simplex dimension---to define canonical vertex layerings of \(\Gn\), and a small computational atlas to examine the early emergence of the outer framework, the interior body, the axial center, and the spine.

Accordingly, the family
\[
G_1,G_2,G_3,\dots
\]
should be viewed not only as a sequence of finite graphs, but also as a family of increasingly articulated discrete forms. The paper adopts an intentionally morphological vocabulary. Terms such as antenna vertices, framework, axis, central region, and spine are used not as loose metaphors, but as names for graph-theoretically defined or graph-theoretically motivated structures that organize the large-scale geometry of partition graphs.

The paper serves as a foundation for a broader program on the geometry and morphology of partition graphs. It is partly formal and partly exploratory: exact graph-theoretic definitions and basic propositions are combined with a small computational atlas and with open structural questions for later work. The emphasis is not on a single new theorem, but on a coordinated family of definitions, constructions, and observations.

The paper is organized as follows. Section~2 recalls the partition graph and introduces the antenna vertices, the main chain, the edges, and the boundary framework. Section~3 studies conjugation symmetry and the self-conjugate axis. Section~4 uses local simplex dimension to define simplex layers and a first morphological organization. Section~5 turns to the degree landscape. Section~6 introduces axial central regions and the spine. Section~7 discusses directionality and anisotropy. Section~8 adopts a morphogenetic viewpoint on the family \((G_n)\). Section~9 presents an atlas of small partition graphs. Section~10 formulates heuristic principles, open questions, and future directions, and Section~11 concludes the paper. Appendix~A records the small-range numerical data, and Appendix~B collects the glossary of structural terms.

\section{The partition graph as a geometric object}

\subsection{The graph \texorpdfstring{\(\Gn\)}{G\_n}}

A partition of a positive integer \(n\) is a finite weakly decreasing sequence
\[
\lambda=(\lambda_1,\lambda_2,\dots,\lambda_\ell), \qquad
\lambda_1\ge \lambda_2\ge \dots \ge \lambda_\ell>0, \qquad
\sum_{i=1}^{\ell}\lambda_i=n.
\]
We write \(\lambda\vdash n\) when \(\lambda\) is a partition of \(n\). As in \cite{LyuHomotopy,LyuLocal}, we identify a partition with its Ferrers diagram and freely pass between row-length notation and diagrammatic language.

The partition graph \(\Gn\) is the graph whose vertices are the partitions of \(n\), and whose edges correspond to elementary transfers of one cell in the Ferrers diagram: one moves a cell from a removable corner to an addable corner, followed by reordering, provided that the resulting partition is different from the original one. Equivalently, an edge corresponds to transferring one cell from one row to another row, where the target row may also be a new row of length \(0\), thereby creating a new part \(1\). This is exactly the graph model studied in \cite{LyuHomotopy,LyuLocal}.

\begin{lemma}\label{lem:connected}
For every \(n\ge 1\), the graph \(\Gn\) is connected.
\end{lemma}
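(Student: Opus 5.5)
We will show that every partition \(\lambda\vdash n\) is joined by a path in \(\Gn\) to the one-row partition \((n)\). Since every vertex is then in the connected component of \((n)\), this proves the claim. The case \(n=1\) is trivial because \(G_1\) has a single vertex, so we assume \(n\ge 2\).

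The argument uses a monovariant. For \(\lambda=(\lambda_1,\dots,\lambda_\ell)\) with \(\lambda\neq(n)\) we have \(\ell\ge 2\). Move one cell from the last row \(\lambda_\ell\) to the first row \(\lambda_1\). The resulting sequence
\[
(\lambda_1+1,\lambda_2,\dots,\lambda_{\ell-1},\lambda_\ell-1),
\]
with a zero entry deleted if \(\lambda_\ell=1\), is still weakly decreasing after reordering. Its first part is \(\lambda_1+1\neq\lambda_1\), so it is a partition of \(n\) different from \(\lambda\).

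We must check that this is a legitimate edge in the sense of the definition, namely a cell removed from a removable corner and placed at an addable corner. The last cell of row \(\ell\) is removable, because no row lies below it that could be as long. The cell appended to row \(1\) is addable, because no row lies above row \(1\). Equivalently, in the row-transfer description, this is simply the transfer of one cell from row \(\ell\) to row \(1\). So \(\lambda\) is adjacent to this new partition \(\lambda'\).

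Each such step strictly increases \(\lambda_1\), which is bounded above by \(n\). Hence after at most \(n-\lambda_1\) steps we reach a partition with largest part \(n\), which must be \((n)\). This gives the required path.

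The only delicate point is verifying that the chosen transfer matches the paper's edge definition. In particular, the row-transfer formulation must accept transfers between non-adjacent rows, and reordering must never be needed in a way that spoils adjacency. Both hold because removing the last cell of the last row and appending to the first row already yields a weakly decreasing sequence.
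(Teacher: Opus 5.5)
Your proof is correct and is essentially the paper's argument: repeatedly transfer one cell from the last row to the first row until \((n)\) is reached. Your monovariant (\(\lambda_1\) increases by one at each step, so exactly \(n-\lambda_1\) steps are needed) makes explicit the termination that the paper phrases as ``eventually absorbs all rows into the first row.'' The only blemish is cosmetic: calling the new partition \(\lambda'\) clashes with the paper's notation for the conjugate.
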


\begin{proof}
Let \(\lambda=(\lambda_1,\dots,\lambda_\ell)\vdash n\). If \(\ell=1\), then \(\lambda=(n)\). Otherwise, \(\ell\ge 2\), so the last row contains a removable corner and the first row admits an addable corner. Transferring one cell from the last row to the first row yields, after reordering, a neighbor of \(\lambda\) in \(\Gn\). Repeating this operation eventually absorbs all rows into the first row and reaches \((n)\). Hence every vertex is connected to \((n)\), so \(\Gn\) is connected.
\end{proof}

\subsection{Ferrers-diagram viewpoint}

The Ferrers-diagram model gives \(\Gn\) much more structure than a generic finite graph would carry. First, an edge has a geometric interpretation: it is an elementary motion of one cell. Second, conjugation of partitions acts naturally on the vertex set and preserves adjacency. Third, certain subfamilies of partitions, such as hook partitions, two-part partitions, self-conjugate partitions, and rectangular partitions, occupy visibly distinguished positions. The graph thus inherits a nontrivial internal geometry from the geometry of partitions themselves.

A closer inspection of the partition graph \(\Gn\) shows that its outer morphology is not organized by a single boundary path, but by a small system of distinguished extremal structures. At the most basic level, these are the extremal hook endpoints, the shortest path connecting them, and the side chains branching from that path. Together they form the first large-scale outer framework of the graph.

\subsection{The antenna vertices}

\begin{definition}
The vertices
\[
(n), \qquad (1^n)
\]
are called the \emph{antenna vertices} of \(\Gn\). For \(n\ge 2\) they are distinct degree-one vertices; for \(n=1\) they coincide and form a degenerate one-vertex version of the same notion. The term is motivated by the two outer antenna-like protrusions visible in the atlas of small partition graphs.
\end{definition}

They are exchanged by conjugation and determine the maximal longitudinal extent of the partition graph.

\begin{proposition}\label{prop:antennas-degree-one}
For \(n\ge 2\), the only degree-one vertices of \(\Gn\) are the two antenna vertices \((n)\) and \((1^n)\).
\end{proposition}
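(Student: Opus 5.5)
The plan has two parts: first show that the antenna vertices have degree exactly one, then show that every other partition of \(n\) has at least two distinct neighbours in \(\Gn\).

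\textbf{The antennas.} For \(\lambda=(n)\) there is a single row. The only possible move transfers one cell from that row to a new row, which gives \((n-1,1)\). This differs from \((n)\) when \(n\ge 2\), so \(\deg(n)=1\). For \((1^n)\) one could argue by conjugation, since conjugation preserves adjacency. Directly: every move takes a cell from a row of length \(1\) and puts it on another row of length \(1\), or on a new row, which changes nothing. The only nontrivial outcome is \((2,1^{n-2})\), so \(\deg(1^n)=1\).

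\textbf{Every other vertex.} Let \(\lambda\vdash n\) be neither antenna. Then \(\ell\ge 2\) and \(\lambda_1\ge 2\). I will exhibit two neighbours of \(\lambda\) whose multisets of parts differ.
\begin{enumerate}[label=(\roman*)]
\item Move a cell from the last row (length \(\lambda_\ell\)) to the first row. This gives \(\mu=\lambda\) with \(\lambda_1\mapsto\lambda_1+1\) and \(\lambda_\ell\mapsto\lambda_\ell-1\), where a part \(0\) is deleted. It raises the largest part, so \(\mu\ne\lambda\).
\item Move a cell from the first row to a new row. This gives \(\nu=\lambda\) with \(\lambda_1\mapsto\lambda_1-1\) and a new part \(1\) appended. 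It increases the number of parts by one, so \(\nu\ne\lambda\).
\end{enumerate}

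Finally, \(\mu\ne\nu\). The number of parts of \(\mu\) is \(\ell\) or \(\ell-1\), while \(\nu\) has \(\ell+1\) parts. So \(\deg\lambda\ge 2\).

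\textbf{Main obstacle.} The only delicate point is confirming that the moves are genuine edges in the sense of the definition, namely transfers from a removable corner to an addable corner. After reordering, moving a cell from any row to any other row is realized by such a corner transfer. This is the stated equivalence with the ``row-to-row transfer'' description, which I will invoke directly. Under that equivalence no case analysis is needed beyond checking that the resulting partitions are distinct. The part-count comparison above does this uniformly, with no separate treatment of \(\lambda_\ell=1\).
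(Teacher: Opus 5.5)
Your proposal is correct and follows the paper's proof essentially step for step. Both arguments treat the antennas by their single nontrivial move (the paper uses conjugation for \((1^n)\), which you also note), and for every other vertex both use the same two neighbours, last row to first row and first row to a new row, distinguished by their number of parts; your explicit checks that \(\mu\neq\lambda\) and \(\nu\neq\lambda\) are left implicit in the paper.
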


\begin{proof}
The partition \((n)\) is adjacent only to \((n-1,1)\), and by conjugation \((1^n)\) is adjacent only to \((2,1^{n-2})\). Conversely, let \(\lambda\vdash n\) be different from both \((n)\) and \((1^n)\). Then \(\lambda\) has at least two parts and also satisfies \(\lambda_1\ge 2\). Transferring one cell from the last row to the first row produces a neighbor \(\mu\) of \(\lambda\). Transferring one cell from the first row to a new row of length \(1\) produces another neighbor \(\nu\) of \(\lambda\). These neighbors are distinct because \(\nu\) has \(\ell+1\) parts, whereas \(\mu\) has at most \(\ell\) parts. Hence \(\deg_{\Gn}(\lambda)\ge 2\).
\end{proof}

\subsection{The main chain}

For \(n\ge 2\), between the two antenna vertices there is a canonical path formed by the hook partitions
\[
(n),\ (n-1,1),\ (n-2,1^2),\ \dots,\ (n-k,1^k),\ \dots,\ (2,1^{n-2}),\ (1^n).
\]

\begin{definition}
The \emph{main chain} of \(\Gn\) is the hook-partition path
\[
\Mn:=\{(n-k,1^k):0\le k\le n-1\}.
\]
\end{definition}

\begin{proposition}
For \(n\ge 2\), the vertices of \(\Mn\), taken in their natural order, form a shortest path in \(\Gn\) between the two antenna vertices. For \(n=1\), \(\Mn\) consists of the single vertex \((1)\).
\end{proposition}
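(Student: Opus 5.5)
The argument has two parts: first check that consecutive hooks are adjacent, so the main chain is a path of length \(n-1\); then show that no path between the antenna vertices is shorter, using a potential function that changes by at most one along each edge.

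\emph{Adjacency.} Fix \(0\le k\le n-2\). Transferring one cell from the first row of \((n-k,1^k)\) to a new row of length \(0\) yields \((n-k-1,1^{k+1})\), which is a partition after reordering since \(n-k-1\ge 1\). For \(k\le n-3\) the first row stays strictly longest, and for \(k=n-2\) the transfer gives \((1^n)\) directly. The two hooks are distinct because their numbers of parts differ. Hence the listed vertices form a path of length \(n-1\), and all of them are distinct because the number of parts \(\ell(\lambda)=k+1\) is strictly increasing along the list. For \(n=1\) there is nothing to prove, since \(M_1=\{(1)\}\).

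\emph{Lower bound.} I would use the number of parts \(\ell(\lambda)\) as the potential. An elementary transfer moves one cell from a source row to a target row. The source row either survives, leaving \(\ell\) unchanged, or disappears because it had length \(1\), lowering \(\ell\) by one. The target row is either an existing row, leaving \(\ell\) unchanged, or a new row, raising \(\ell\) by one. Reordering does not affect \(\ell\). So \(|\ell(\lambda)-\ell(\mu)|\le 1\) for every edge \(\lambda\mu\) of \(\Gn\). Since \(\ell((n))=1\) and \(\ell((1^n))=n\), every path from \((n)\) to \((1^n)\) has at least \(n-1\) edges. Therefore \(\dist_{\Gn}((n),(1^n))=n-1\), and the main chain realizes this distance, so it is a shortest path.

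I expect the only point needing real care to be the Lipschitz property of \(\ell\). One must confirm that a single transfer cannot both delete a source row and create a new row with a net change of two. It cannot: deletion lowers \(\ell\) and creation raises it, so if both occur in one transfer the net change is zero, not two. The same bound also follows from the inequality \(|\lambda_1-\mu_1|\le 1\), which can serve as a cross-check via conjugation.
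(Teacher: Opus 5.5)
Your proposal is correct and follows the paper's argument. The number of parts changes by at most one under each elementary transfer, which forces length at least \(n-1\) between \((n)\) and \((1^n)\), and the hook path attains this bound. Your explicit check that consecutive hooks are adjacent and distinct is a step the paper leaves implicit, and it is a welcome addition.
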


\begin{proof}
Each elementary transfer changes the number of parts by at most one. The partition \((n)\) has exactly one part, whereas \((1^n)\) has exactly \(n\) parts. Hence every path from \((n)\) to \((1^n)\) in \(\Gn\) has length at least \(n-1\). The hook path \(\Mn\) has exactly \(n\) vertices and therefore length \(n-1\), so it realizes this lower bound. The case \(n=1\) is immediate.
\end{proof}

The second vertices from the two ends of the main chain are
\[
(n-1,1)\qquad\text{and}\qquad (2,1^{n-2}).
\]
For \(n\ge 4\), these are the first branching points of the outer framework. Indeed, \((n-1,1)\) is adjacent exactly to \((n)\), \((n-2,1^2)\), and \((n-2,2)\), while \((2,1^{n-2})\) is its conjugate.

\subsection{The left and right edges}

On one side, we have the chain of two-part partitions
\[
(n-1,1),\ (n-2,2),\ (n-3,3),\ \dots,\ \bigl(n-\lfloor n/2\rfloor,\lfloor n/2\rfloor\bigr).
\]

\begin{definition}
The \emph{left edge} of \(\Gn\) is
\[
\Ln:=\{(n-k,k):1\le k\le \lfloor n/2\rfloor\}.
\]
The \emph{right edge} of \(\Gn\) is
\[
\Rn:=\{\lambda' : \lambda\in \Ln\}.
\]
\end{definition}

\begin{proposition}
The sets \(\Ln\) and \(\Rn\) form paths in \(\Gn\). Moreover, \(\Ln\) meets \(\Mn\) at \((n-1,1)\), and \(\Rn\) meets \(\Mn\) at \((2,1^{n-2})\).
\end{proposition}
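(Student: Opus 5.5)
The plan is to prove the path structure of \(\Ln\) directly and then transport it to \(\Rn\) by conjugation. For \(1\le k<\lfloor n/2\rfloor\), consecutive vertices \((n-k,k)\) and \((n-k-1,k+1)\) are adjacent in \(\Gn\). Indeed, since \(k+1\le\lfloor n/2\rfloor\), we have \(n-k-1\ge k+1\), so moving one cell from the first row to the second produces a weakly decreasing sequence. No reordering is needed, and the result is a different partition. Hence the vertices of \(\Ln\), listed in order of increasing \(k\), are distinct and consecutive ones are adjacent, so \(\Ln\) carries a path. The degenerate cases \(n\le 3\), where \(\Ln=\{(n-1,1)\}\) is a single vertex, are trivial.

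For \(\Rn\), I will first check that conjugation \(\lambda\mapsto\lambda'\) is a graph automorphism of \(\Gn\). This was asserted in the Ferrers-diagram discussion. To verify it, note that transposing the diagram sends a removable corner to a removable corner and an addable corner to an addable corner, so an elementary transfer \(\lambda\to\mu\) becomes an elementary transfer \(\lambda'\to\mu'\). Since conjugation is a bijection on partitions of \(n\) and an involution, it preserves both adjacency and non-adjacency. It follows that \(\Rn=\{(n-k,k)':1\le k\le\lfloor n/2\rfloor\}\), in the induced order, is a path. Explicitly, \((n-k,k)'=(2^k,1^{n-2k})\).

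For the intersections, the elements of \(\Mn\) are the hooks \((n-j,1^j)\). A two-part partition \((n-k,k)\) with \(k\ge1\) is a hook exactly when \(k=1\), so \(\Ln\cap\Mn=\{(n-1,1)\}\). Similarly, \((2^k,1^{n-2k})\) is a hook exactly when at most one part exceeds \(1\), which forces \(k=1\). Therefore \(\Rn\cap\Mn=\{(2,1^{n-2})\}\). This is also the image of the first intersection under conjugation, because \(\Mn\) is conjugation-invariant: \((n-j,1^j)'=(j+1,1^{n-j-1})\).

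None of these steps is a real obstacle. The only point that needs care is checking that conjugation preserves adjacency, that is, that the corner-transfer description is symmetric under transposition. The clean way to see this is that \(\mu\) is adjacent to \(\lambda\) exactly when \(\mu\) is obtained by removing a removable cell of \(\lambda\) and adding an addable cell elsewhere. Equivalently, \(\lambda\) and \(\mu\) share a common sub-diagram \(\nu\vdash n-1\) with \(\lambda\neq\mu\). This formulation is manifestly invariant under transposition.
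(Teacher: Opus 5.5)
Your proposal is correct and follows essentially the same route as the paper. Consecutive two-part partitions differ by a one-cell transfer from the first row to the second, conjugation (an automorphism) carries \(\Ln\) to \(\Rn\), and among two-part partitions only \((n-1,1)\) is a hook, with the \(\Rn\cap\Mn\) case following by conjugation. Your extra checks (the inequality \(n-k-1\ge k+1\), and the common-subdiagram characterization of adjacency, which handles the reordering step more cleanly than the paper's corner argument) are sound; the only slip is that for \(n=1\) one has \(L_1=\varnothing\), not a single vertex.
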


\begin{proof}
Successive vertices of \(\Ln\) differ by transferring one cell from the first part to the second part, so \(\Ln\) is a path in \(\Gn\). Since conjugation is a graph automorphism (Proposition~\ref{prop:conjugation-automorphism} below), the image \(\Rn\) is also a path.

Every vertex of \(\Ln\) corresponds to a partition with exactly two parts. A hook partition has the form \(n-k,1^k\), so among two-part partitions only \((n-1,1)\) is a hook. Hence \(\Ln\cap \Mn=\{(n-1,1)\}\). Conjugating gives \(\Rn\cap \Mn=\{(2,1^{n-2})\}\).
\end{proof}

\subsection{Boundary framework}

\begin{definition}
The \emph{boundary framework} of \(\Gn\) is
\[
\Fn:=\Mn\cup \Ln\cup \Rn.
\]
A vertex of \(\Gn\) is called an \emph{interior vertex} if it does not belong to \(\Fn\).
\end{definition}

The boundary framework should be understood as a first canonical outer scaffold of \(\Gn\), not as an exhaustive description of every extremal feature visible in the atlas. This terminology reflects both visual and combinatorial structure. For \(n\ge 2\), the main chain provides the lower longitudinal contour connecting the two antenna vertices. The left and right edges branch from it near the two ends and trace the lateral contours of the graph. In the small-graph atlas, this framework already captures a substantial part of the visible outer morphology.

The conjugation symmetry acts naturally on this framework. It exchanges the two antenna vertices, reverses the main chain as a set, and swaps the left and right edges. Thus the boundary framework is a canonical part of the large-scale geometry of \(\Gn\).

At this stage, it is useful to distinguish the framework from the deeper interior of the graph. The interior is the set
\[
V(\Gn)\setminus \Fn.
\]

\begin{remark}
The atlas also suggests a further extremal structure associated with rectangular partitions. We do not formalize it here, but record it as a candidate for a future rectangular contour or rear edge complementing the present outer scaffold.
\end{remark}

\section{Conjugation symmetry and the self-conjugate axis}

The partition graph \(\Gn\) carries a natural involutive symmetry given by conjugation of partitions. At the level of Ferrers diagrams, conjugation interchanges rows and columns and preserves the elementary transfer relation. Thus conjugation defines a graph automorphism
\[
\lambda \longmapsto \lambda'
\]
of \(\Gn\).

This symmetry is one of the main organizing principles of the large-scale morphology of the partition graph. It exchanges the two antenna vertices
\[
(n)\longleftrightarrow (1^n),
\]
reverses the main chain
\[
(n),\ (n-1,1),\ (n-2,1^2),\ \dots,\ (2,1^{n-2}),\ (1^n),
\]
and swaps the left and right edges. Thus the boundary framework introduced above is not just a convenient collection of extremal paths: it is complemented by a canonical internal symmetry.

\subsection{Conjugation as an automorphism}

\begin{proposition}\label{prop:conjugation-automorphism}
Conjugation of partitions defines an involutive automorphism of \(\Gn\).
\end{proposition}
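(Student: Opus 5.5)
Conjugation \(\lambda\mapsto\lambda'\) maps partitions of \(n\) to partitions of \(n\) and satisfies \((\lambda')'=\lambda\). Hence it is an involutive bijection of \(V(\Gn)\). It remains to show that it preserves adjacency. Because the map is an involution, it suffices to prove one direction: if \(\lambda\sim\mu\) in \(\Gn\), then \(\lambda'\sim\mu'\). The converse then follows by applying this to \(\lambda',\mu'\).

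The plan is to use the intrinsic, order-free description of an edge. Identify a Ferrers diagram with a finite order ideal \(D\subset\mathbb{Z}_{>0}^2\). The definition gives \(\lambda\sim\mu\) if and only if \(\lambda\ne\mu\) and \(\mu\) is obtained from \(\lambda\) by removing one cell from some row and adding it to another row (possibly a new one), then reordering. I will restate this as follows. Form the multisets of row lengths. Then \(\lambda\sim\mu\) if and only if \(\lambda\ne\mu\) and there exist \(a\ge 1\) and \(b\ge 0\) such that the multiset of \(\mu\) is that of \(\lambda\) with one \(a\) replaced by \(a-1\) and one \(b\) replaced by \(b+1\). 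Here zeros are padded freely, and we may take \(a\ne b+1\) so that the move is nontrivial.

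The key step is to translate this into column language. It helps to use the sequence of multiplicities, or equivalently the diagram \(D\) and the symmetric difference \(D\triangle E\). Passing from \(\lambda\) to \(\mu\) reorders the rows, but the diagrams stay normalized. I will show that \(\lambda\sim\mu\) is equivalent to \(|D_\lambda\setminus D_\mu|=1\), or equivalently \(|D_\mu\setminus D_\lambda|=1\), since both diagrams have \(n\) cells.

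The reason is the following. Decreasing one part of size \(a\) by one, in the sorted representation, amounts to removing the last cell of the \emph{lowest} row of length \(a\), which is a removable corner. Increasing a part \(b\) by one amounts to adding a cell at the end of the \emph{highest} row of length \(b\), which is an addable corner. When the two operations interact (for instance \(b=a-1\), or the removal changing which row is highest), one checks directly that the resulting sorted diagram still differs from \(D_\lambda\) in exactly one removed cell and one added cell. Conversely, if \(D_\mu=D_\lambda\setminus\{c\}\cup\{c'\}\) with \(c\ne c'\), then the row containing \(c\) loses a cell and the row containing \(c'\) gains one, which is a transfer.

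Once adjacency is characterized as ``\(D_\mu\) is obtained from \(D_\lambda\) by deleting one cell and inserting one different cell, with both diagrams partitions of \(n\)'', conjugation visibly preserves it. Transposition \(D\mapsto D^{t}\) is a bijection of \(\mathbb{Z}_{>0}^2\) that sends order ideals to order ideals, and it satisfies \(|D^t\setminus E^t|=|D\setminus E|\). So \(\lambda\sim\mu\) if and only if \(\lambda'\sim\mu'\).

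The main obstacle is the careful verification of the equivalence between ``transfer followed by reordering'' and ``the diagrams differ in exactly one cell each way''. In particular, this includes the intermediate possibility in which the removed corner and the added corner interact after sorting, for example transferring from a row of length \(a\) to a row of length \(a-1\). That case yields the same multiset and is excluded by \(\lambda\ne\mu\). I would handle this case by case on whether \(b<a-1\) or \(b\ge a\), using the lowest-row/highest-row choice above.
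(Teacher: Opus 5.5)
Your argument is correct and is essentially the paper's proof. The paper simply asserts that transposition sends removable corners to removable corners and addable corners to addable corners, hence elementary transfers to elementary transfers. Your criterion \(|D_\lambda\setminus D_\mu|=1\), with the lowest-row/highest-row choice that leaves the modified row sequence already sorted whenever \(b\neq a-1\), makes rigorous the step the paper passes over: that ``transfer followed by reordering'' really is a one-cell move between normalized diagrams.

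The only blemish is expository. You first list \(b=a-1\) as an interaction case in which the diagrams still differ by one cell, but, as you correctly note at the end, that case gives \(\mu=\lambda\) and is simply excluded.
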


\begin{proof}
Conjugation interchanges rows and columns in Ferrers diagrams, sending removable corners to removable corners and addable corners to addable corners. Accordingly, an elementary transfer of one cell in a Ferrers diagram becomes an elementary transfer of one cell in the conjugate diagram. Hence adjacency in \(\Gn\) is preserved. Since applying conjugation twice returns the original partition, the map is involutive.
\end{proof}

\subsection{The self-conjugate axis}

\begin{definition}
The \emph{self-conjugate axis} of \(\Gn\) is
\[
\SC_n:=\{\lambda\vdash n:\lambda=\lambda'\}.
\]
\end{definition}

Here ``axis'' is meant in a structural sense. In general, \(\SC_n\) need not form a single path or chain inside the graph. Its role is different: it is the fixed-point set of the distinguished involution of \(\Gn\). As such, it provides a canonical central reference set for the geometry of the graph. In the visual atlas of small partition graphs, the self-conjugate vertices occupy a visibly central position and organize the surrounding morphology.

Since \(\Gn\) is connected by Lemma~\ref{lem:connected}, the distance from a vertex to \(\SC_n\) is an ordinary finite graph distance whenever \(\SC_n\neq\varnothing\). This leads to the notion of axial distance, which will be used below to define central regions and to describe the emergence of a spine-like axial structure.

The interaction between conjugation symmetry and the boundary framework is especially important. The two antenna vertices are exchanged by conjugation, the main chain is invariant as a set, and the two edges are mapped to one another. Thus the outer shape of \(\Gn\) is reflected by an internal axial symmetry. This gives the graph a more rigid large-scale organization than one would expect in a generic combinatorial graph.

\subsection{Axial distance}

\begin{definition}
For a subset \(S\subseteq V(\Gn)\), define
\[
\dist_{\Gn}(\lambda,S):=
\begin{cases}
\min_{\mu\in S}\dist_{\Gn}(\lambda,\mu), & S\neq\varnothing,\\
\infty, & S=\varnothing.
\end{cases}
\]
For \(\lambda\in V(\Gn)\), the \emph{axial distance} is then defined by
\[
\axdist(\lambda):=\dist_{\Gn}(\lambda,\SC_n).
\]
\end{definition}

Axial distance provides a first bridge between symmetry and morphology. It allows one to distinguish vertices lying close to the central symmetric zone from those located farther away toward the outer framework. In particular, it will be used below to define central regions and to describe the emergence of a spine-like axial structure.

From a large-scale point of view, the role of the self-conjugate axis is twofold. First, it is the canonical center of symmetry of \(\Gn\). Second, it acts as a geometric organizer: both the degree landscape and the local simplex layering tend to become richer in its vicinity than on the extreme outer framework. This contrast between symmetric center and sparse periphery is one of the most basic morphological features of the family of partition graphs.

\section{Local simplex layers and morphological organization}

\subsection{Local input from the companion paper}

We briefly recall the relevant results from the local theory. For a partition
\[
\lambda=(s_1^{m_1},\dots,s_t^{m_t}),
\qquad s_1>\cdots>s_t>0,
\]
the ordered local transfer type
\[
t(\lambda)=(t;\alpha_1,\dots,\alpha_t;\beta_1,\dots,\beta_t)
\]
records the support size \(t\), the singleton-block indicators \(\alpha_i\), and the unit-gap indicators \(\beta_i\). The main local structure theorem of \cite{LyuLocal} shows that this ordered binary datum completely determines the local admissibility graph \(B(\lambda)\), the induced neighborhood graph \(N(\lambda)\), the degree of \(\lambda\), and the local clique number \(\omega_{\mathrm{loc}}(\lambda)\). It also determines the local simplex dimension \(\locdim(\lambda)\), that is, the largest simplex dimension of \(\Kn=\Cl(\Gn)\) among simplices containing \(\lambda\). In particular, one has
\[
N(\lambda)\cong L(B(\lambda)),
\]
and both degree and local simplex dimension admit explicit descriptions in terms of \(t(\lambda)\).

For the purposes of the paper, the most important consequence is that the local complexity of a vertex is encoded by a small intrinsic combinatorial signature. Thus local morphology can be compared across distant regions of the graph without referring to any ambient drawing.

\subsection{Simplex layers}

We now introduce the simplest global vertex layering derived from this local theory.

\begin{definition}
For \(r\ge 0\), define
\[
\simpLayer_r(n):=\{\lambda\in V(\Gn):\locdim(\lambda)=r\}.
\]
We call \(\simpLayer_r(n)\) the \emph{\(r\)-th simplex layer} of \(\Gn\).
\end{definition}

Thus the vertex set of \(\Gn\) is partitioned according to the largest simplex dimension among simplices containing a given vertex. Equivalently, the simplex layers record the local clique richness of vertices in \(\Kn=\Cl(\Gn)\).

This layering is natural for several reasons. First, it is intrinsic: it depends only on the local combinatorics of \(\Gn\). Second, it is compatible with the earlier topological point of view, since \(\locdim(\lambda)\) is defined in terms of simplices in \(\Kn\). Third, it is morphologically informative: unlike purely formal labels, the simplex layers can be visualized on the graph and reveal a non-uniform spatial distribution.

The smallest simplex layers occur at the most rigid vertices. For example, the two antenna vertices \((n)\) and \((1^n)\) have degree \(1\). The largest simplex of \(K_n\) containing either of them is just the edge joining it to its unique neighbor; hence their local clique number is \(2\) and their local simplex dimension is \(1\). More generally, vertices lying on the outer framework tend to support simpler local configurations than vertices in the central body. By contrast, larger values of \(\locdim\) arise where several admissible transfers coexist in a more highly organized way. This already suggests that simplex layers should be viewed as a measure of local thickness.

The local structure theorem also shows that simplex complexity is not an arbitrary function on partitions. Since \(\locdim(\lambda)\) depends only on the ordered local transfer type \(t(\lambda)\), vertices with the same local transfer type belong automatically to the same simplex layer. In this sense, the simplex layering is coarser than the classification by local transfer type, but finer than crude graph-theoretic distinctions such as degree alone.

\subsection{Morphological meaning}

From the large-scale morphological point of view, the main significance of the simplex layers is that they make visible the transition from sparse outer regions to richer inner zones. In the small-graph atlas, low simplex layers dominate the antenna vertices, the main chain, and much of the outer framework, while higher layers begin to accumulate nearer the self-conjugate axis and within the central body. Thus the simplex layers provide a first precise sense in which the graph develops an interior of increasing local complexity.

This point is especially interesting in light of the earlier topological result on the clique complex \(\Kn\). Although the local simplex structure becomes more complicated as \(n\) grows, the global homotopy type of \(\Kn\) remains extremely simple: it is always a wedge of \(2\)-spheres \cite{LyuHomotopy}. The simplex layers belong exactly to the side of the theory where complexity accumulates locally rather than globally.

The simplex layers are only the first and most geometric of several possible local layerings. One may likewise partition \(V(\Gn)\) by degree, by local transfer type, or by isomorphism class of the neighborhood graph \(N(\lambda)\). Among these, however, the simplex layering is especially well suited to the present paper, because it links local combinatorics directly to the emerging geometry of the graph and to the earlier clique-complex viewpoint.

\subsection{Degree layers}

For later use, it is also convenient to introduce a parallel degree layering.

\begin{definition}
For \(d\ge 0\), define
\[
\deglayer_d(n):=\{\lambda\in V(\Gn):\deg_{\Gn}(\lambda)=d\}.
\]
We call \(\deglayer_d(n)\) the \emph{degree-\(d\) layer} of \(\Gn\).
\end{definition}

The degree layers and the simplex layers are related but not identical. Degree measures the total number of admissible directions from a vertex, whereas \(\locdim\) measures the size of the largest coherent simplex through that vertex. Thus degree reflects local abundance of moves, while simplex dimension reflects local compatibility of moves. Their comparison will play a role in the description of the degree landscape in the next section.

At the descriptive level relevant here, the main lesson is that \(\Gn\) is not morphologically uniform. Its vertices fall into natural local-complexity classes, and these classes exhibit a visible spatial organization. The simplex layers provide the first systematic language for this organization. They allow us to speak not only about central and peripheral regions in an intuitive way, but also about zones in which different levels of local simplex complexity occur and concentrate.

Simplex layering is the first real bridge between the local theory of a single vertex and the large-scale theory of the whole graph. It does not describe the complete geometry of \(\Gn\), but it does provide a usable coordinate for that geometry.

\section{Degree landscape}

The simplex layers introduced in the previous section measure one aspect of local complexity: the size of the largest coherent simplex through a vertex. A second, more immediate local invariant is the degree. While simplex dimension measures the maximal compatibility of admissible moves, the degree counts their total number. Taken over the whole graph, this gives rise to what may be called the degree landscape of \(\Gn\).

\subsection{Degree as local mobility}

Recall from the local structure theorem of \cite{LyuLocal} that the degree of a partition \(\lambda\) is completely determined by its ordered local transfer type \(t(\lambda)\). In particular, degree is not an arbitrary function on the graph: it is encoded by the block structure of the partition itself. Thus, just as the simplex layers partition \(V(\Gn)\) according to local clique richness, the degree layers partition \(V(\Gn)\) according to local mobility under elementary transfers.

For \(d\ge 0\), we have already defined
\[
\deglayer_d(n):=\{\lambda\in V(\Gn):\deg_{\Gn}(\lambda)=d\},
\]
which is self-conjugate under conjugation. At one extreme of this landscape lie the antenna vertices
\[
(n),\qquad (1^n),
\]
which have degree \(1\). They are the unique vertices of minimal mobility and therefore mark the two extreme ends of the graph. The next vertices on the main chain,
\[
(n-1,1)\qquad\text{and}\qquad (2,1^{n-2}),
\]
already have higher degree; for \(n\ge 4\), they are the first branching points of the outer framework and have degree \(3\). This is precisely where the left and right edges split off from the main chain. Thus the degree landscape already reflects the basic longitudinal structure of \(\Gn\): the antenna vertices are terminal points, while the adjacent hook vertices are the first nontrivial branching points.

More generally, the outer framework tends to support relatively small degrees. This does not mean that every framework vertex is locally trivial. For instance, a rectangular partition \((r^m)\) with \(r,m\ge 2\) has degree \(2\), yet its two neighbors are adjacent, so it already belongs to a triangle in \(\Gn\), and its local simplex dimension is \(2\). Thus low degree does not imply complete local simplicity. What it does indicate is a restricted number of admissible directions.

At the opposite end of the landscape lie vertices with many admissible transfers. The local theory shows that high degree can occur even in the presence of many forbidden local edges, provided the support structure is sufficiently rich. A particularly instructive example is the staircase partition
\[
\delta_t=(t,t-1,\dots,2,1),
\]
which exists exactly when \(n=t(t+1)/2\) is a triangular number, is self-conjugate, and satisfies
\[
\deg_{\Gn}(\delta_t)=t(t-1),\qquad \locdim(\delta_t)=t-1
\]
by the formulas from \cite{LyuLocal}. Thus degree may already grow linearly in \(n\) along highly structured central configurations, even when local admissibility is constrained by many singleton blocks and unit gaps.

\subsection{Degree versus simplex dimension}

This already shows that the degree landscape is not simply a blurred copy of the simplex layering. The two are related, but they measure different aspects of local geometry. Degree counts admissible moves; simplex dimension measures the largest family of mutually compatible moves. In particular, two vertices may have comparable degree but different simplex dimension, or vice versa. The two functions should be regarded as complementary coordinates on the local morphology of \(\Gn\).

From the large-scale point of view, the most important feature of the degree landscape is its spatial non-uniformity. In the computed atlas of small partition graphs, low degrees dominate the outer framework, while larger degrees accumulate toward the interior and especially near the self-conjugate axis. Degree thus acts as a first quantitative marker of the distinction between sparse periphery and richer central body.

This tendency becomes clearer as \(n\) grows. New degree values appear, and the range of the degree function widens. The resulting landscape is not flat: it develops visible relief. Some regions remain narrow and constrained, while others support increasingly many admissible directions. The partition graph thus begins to look less like a homogeneous discrete network and more like a structured body with zones of different local mobility.

It is useful to emphasize that this phenomenon is fully compatible with the local transfer-type theory. Since degree depends only on \(t(\lambda)\), the emergence of a richer degree landscape does not reflect disorder. On the contrary, it reflects the increasing diversity of local support patterns available as \(n\) grows. The global morphology becomes richer because the repertoire of local transfer types becomes richer.

\subsection{Symmetry of the degree landscape}

The degree landscape also interacts naturally with the axial picture introduced earlier. Since conjugation preserves the graph structure, it preserves degree as well.

\begin{proposition}
Conjugation preserves degree:
\[
\deg_{\Gn}(\lambda)=\deg_{\Gn}(\lambda')
\]
for every vertex \(\lambda\in V(\Gn)\).
\end{proposition}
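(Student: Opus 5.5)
The plan is to deduce the statement directly from Proposition~\ref{prop:conjugation-automorphism}, which says that conjugation \(\lambda\mapsto\lambda'\) is an involutive automorphism of \(\Gn\). An automorphism preserves adjacency in both directions. So it induces a bijection between the neighborhood of \(\lambda\) and the neighborhood of \(\lambda'\), and the degrees must agree.

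Concretely, I would fix \(\lambda\vdash n\) and write \(N(\lambda)\) for its set of neighbors in \(\Gn\), here just as a vertex set. Consider the map \(N(\lambda)\to N(\lambda')\) given by \(\mu\mapsto\mu'\).

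\begin{itemize}
\item It is well defined: if \(\mu\) is adjacent to \(\lambda\), then \(\mu'\) is adjacent to \(\lambda'\), since conjugation preserves adjacency.
\item It is injective, because conjugation is a bijection on partitions of \(n\).
\item It is surjective: for \(\nu\in N(\lambda')\), applying the same preservation to the pair \((\lambda',\nu)\) gives that \(\nu'\) is adjacent to \(\lambda''=\lambda\). Here we use that conjugation is involutive, so \(\nu=(\nu')'\) is the image of \(\nu'\in N(\lambda)\).
\end{itemize}

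Hence \(|N(\lambda)|=|N(\lambda')|\), which is the claimed equality \(\deg_{\Gn}(\lambda)=\deg_{\Gn}(\lambda')\).

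There is essentially no obstacle here. The only point deserving care is that adjacency must be preserved in both directions; an injective adjacency-preserving map could in principle add edges. This is handled by involutivity, since conjugation is its own inverse and therefore also preserves non-adjacency. As a remark, the same argument shows that each degree layer \(\deglayer_d(n)\) is invariant under conjugation, as asserted earlier in the text.
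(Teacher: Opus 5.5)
Your proof is correct and takes the same approach as the paper. The paper simply notes that conjugation is a graph automorphism (Proposition~\ref{prop:conjugation-automorphism}) and therefore preserves degree; your explicit bijection \(\mu\mapsto\mu'\) between the neighbor sets of \(\lambda\) and \(\lambda'\) spells out that one-line argument in detail.
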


\begin{proof}
Conjugation is a graph automorphism of \(\Gn\), so it preserves adjacency and therefore the degree of each vertex.
\end{proof}

Thus degree is one of the simplest quantitative fields on \(\Gn\) compatible with the canonical symmetry of the graph.

For the purposes of the present paper, the main lesson is descriptive rather than classificatory. Degree should be viewed not only as a local invariant of individual vertices, but also as a global landscape on the whole graph. This landscape already captures one of the central morphological facts about partition graphs: the outer framework is rigid and low-mobility, whereas the interior develops richer and richer local dynamics.

Once degree is regarded as a spatial field rather than merely as a formula, it becomes natural to ask where its higher values concentrate, how they relate to simplex layers, and how they interact with the central region and the spine. These are precisely the structures examined in the next section.

\section{Central region and spine}

The outer morphology of the partition graph \(\Gn\) is organized by the boundary framework
\[
\Fn:=\Mn\cup \Ln\cup \Rn,
\]
consisting of the main chain together with the left and right edges. At the same time, the graph carries a canonical internal symmetry given by conjugation, whose fixed-point set is the self-conjugate axis
\[
\SC_n:=\{\lambda\vdash n:\lambda=\lambda'\}.
\]
The purpose of the present section is to define the first genuinely internal morphological structures of the graph: the central region and the spine.

These notions are meant to capture the fact, already visible in the atlas of small partition graphs, that the interior of \(\Gn\) is not amorphous. As \(n\) grows, a visibly central body emerges around the self-conjugate axis. This body appears visually separated from the outer framework and tends to support richer local structure than the antenna vertices and the lateral extremities. In particular, higher degrees and higher simplex layers tend to accumulate there.

\subsection{Axial central regions}

We begin with the graph-theoretic notion of axial distance. For a vertex \(\lambda\in V(\Gn)\), define
\[
\axdist(\lambda):=\dist_{\Gn}(\lambda,\SC_n).
\]
Thus \(\axdist(\lambda)\) measures how far \(\lambda\) lies from the self-conjugate axis in graph distance.

Using this quantity, we define a family of axial central regions.

\begin{definition}
For \(r\ge 0\), the \emph{axial \(r\)-central region} of \(\Gn\) is
\[
\Cn{r}:=\{\lambda\in V(\Gn)\setminus \Fn:\axdist(\lambda)\le r\}.
\]
\end{definition}

\begin{definition}
The \emph{narrow central region} is \(\Cn{1}\).
\end{definition}

This definition isolates the part of the graph that lies away from the canonical outer framework while remaining close to the conjugation axis. The parameter \(r\) allows one to pass from a very thin central body to thicker axial zones. In this paper, the cases \(r=1\) and \(r=2\) are the most relevant: \(\Cn{1}\) captures the narrow central body, while \(\Cn{2}\) provides a useful first thickening.

The exclusion of \(\Fn\) is intentional. The central regions are not meant to be metric balls around the self-conjugate axis. Rather, they are designed to isolate the interior axial body after the rigid outer scaffold formed by the main chain and the two edges has been removed.

It is important to note that the present definition is intentionally conservative. The set \(\Fn\) includes the main chain and the two edges, but does not yet include the prospective rectangular contour. Since that rear structure has not yet been formalized, the current notion of central region should be understood as a first canonical approximation rather than as a final exhaustive complement of all extremal zones.

\subsection{The spine}

We now turn to the axial skeletal structure connecting the self-conjugate vertices.

If \(\SC_n\neq\varnothing\), write
\[
\SC_n=\{\sigma_1,\sigma_2,\dots,\sigma_m\}
\]
in decreasing lexicographic order, that is, from largest to smallest with respect to the lexicographic order on partitions: \(\sigma_i=(\sigma_{i,1},\sigma_{i,2},\dots)\) precedes \(\sigma_j\) when, at the first index where they differ, one has \(\sigma_{i,k}>\sigma_{j,k}\). For each consecutive pair \((\sigma_i,\sigma_{i+1})\), define
\[
\mathrm{Bridge}(\sigma_i,\sigma_{i+1})
\]
to be the union of the vertex sets of all shortest paths between \(\sigma_i\) and \(\sigma_{i+1}\).

\begin{definition}
If \(\SC_n=\varnothing\), set \(\Spine_n:=\varnothing\). Otherwise, let \(\Spine_n\) be the induced subgraph of \(\Gn\) on the vertex set
\[
V(\Spine_n):=\SC_n\cup\bigcup_{i=1}^{m-1}\mathrm{Bridge}(\sigma_i,\sigma_{i+1}).
\]
\end{definition}

In other words, the spine consists of the self-conjugate axis together with the shortest graph-theoretic corridors connecting consecutive self-conjugate vertices in a fixed canonical order. This makes \(\Spine_n\) a distinguished axial subgraph of \(\Gn\).

The definition is designed to capture the intuitive idea that the spine should consist not only of the fixed points of conjugation, but also of the intermediate vertices through which the axis acquires graph-theoretic thickness and connectivity. In this way, the spine is a canonical skeletal extension of the self-conjugate axis.

At this stage of the theory, the spine should be regarded as a first canonical version of an axial skeleton. It is possible that later work will isolate thinner or more selective variants, such as a minimal spine, a median spine, or a support-adapted spine. For descriptive purposes, however, the definition above is a convenient first canonical construction: it is intrinsic, symmetry-based, computable directly from the graph, and explicit about the ordering convention it uses. We do not claim at this stage that it is the unique intrinsic notion of spine.

\subsection{Zone versus skeleton}

The central region and the spine are closely related, but they are not the same object. The central region is a zone, defined by exclusion of the outer framework and bounded axial distance. The spine is a skeletal subgraph, defined by self-conjugate vertices and shortest bridges between them. Thus the former is volumetric, while the latter is corridor-like and structural.

This distinction is important morphologically. A central region may contain vertices that are near the axis but play no privileged connecting role. Conversely, the spine singles out vertices that participate in the axial linkage of the graph, even when the surrounding central zone is broader. One may think of the spine as an internal organizing scaffold inside the central body.

The atlas suggests that these two notions are geometrically meaningful. For small and moderate values of \(n\), the narrow central region and its first thickening already isolate the visually central part of the graph, while the spine remains concentrated near the self-conjugate axis rather than spreading over the whole interior. This is the kind of behavior one would expect from a first axial skeleton in a growing discrete geometric object.

The role of these notions in the overall program is twofold. First, they provide a language for locating higher local complexity: larger degrees and higher simplex layers tend to concentrate not just in the abstract interior, but specifically in the vicinity of the axial central zone. Second, they prepare a more refined theory of centrality and anisotropy. Once one has a central region and a spine, it becomes meaningful to ask how local mobility, simplex richness, and future support-based invariants are distributed relative to them.

For the purposes of the paper, the main point is that the interior of \(\Gn\) is not just the complement of its outer framework. It possesses its own internal organization. The central region captures this organization at the level of zones, while the spine captures it at the level of axial skeletal structure.

\section{Directionality and anisotropy}

The partition graph \(\Gn\) is not morphologically isotropic. Although it is defined purely combinatorially, its large-scale organization singles out several distinct types of directions. These directions are not introduced through any external embedding, but arise from the internal geometry of the graph itself: from the two antenna vertices, the main chain connecting them, the edges, the self-conjugate axis, and the central region around that axis.

\subsection{Longitudinal and lateral directions}

The first and most obvious direction is the \emph{longitudinal direction} determined by the main chain
\[
\Mn=\{(n-k,1^k):0\le k\le n-1\}.
\]
This chain connects the two antenna vertices and provides the basic end-to-end orientation of the graph. Moving along the main chain corresponds to successive elongation of the first column and shortening of the first row, or vice versa. In this way, the main chain defines the primary longitudinal axis of the outer framework.

A second family of directions is provided by the two lateral edges
\[
\Ln \qquad\text{and}\qquad \Rn,
\]
which branch from the main chain near its two ends. These directions are not equivalent to the longitudinal direction: they do not connect the antenna vertices, but rather lead away from the main chain toward side extremal regimes. Thus the outer framework already distinguishes between motion along the main axis and lateral motion away from it.

\subsection{Axial and transverse moves}

A third directionality is induced by conjugation symmetry. Since the self-conjugate axis
\[
\SC_n=\{\lambda\vdash n:\lambda=\lambda'\}
\]
plays the role of a canonical symmetric center, it is natural to regard moves according to their effect on the axial distance
\[
\axdist(\lambda)=\dist_{\Gn}(\lambda,\SC_n).
\]
This leads to a basic axial dichotomy.

\begin{definition}
An oriented edge \(\lambda\to\mu\) in \(\Gn\) is called
\begin{itemize}[leftmargin=2em]
\item \emph{axial} if \(\axdist(\mu)\le \axdist(\lambda)\),
\item \emph{transverse} if \(\axdist(\mu)>\axdist(\lambda)\).
\end{itemize}
\end{definition}

Thus axial moves lead toward the self-conjugate zone or remain within it, whereas transverse moves lead away from the axis toward the outer parts of the graph.

This terminology should be understood as a first morphological approximation rather than as a complete directional calculus. The same edge may be axial when viewed from one endpoint and transverse when viewed from the other, and a more refined directional theory would likely distinguish several kinds of transverse behavior. Nevertheless, the axial/transverse distinction already captures an important large-scale fact: not all moves in the graph play the same geometric role.

In particular, the directionality induced by the self-conjugate axis is different from the longitudinal direction induced by the main chain. A move may be longitudinal without being strongly axial, or axial without lying on the main chain. These two organizing principles therefore intersect but do not coincide. The main chain describes the end-to-end outer morphology of the graph, whereas axial distance describes the position of a vertex relative to the symmetric center.

\subsection{Spinal direction}

The existence of these distinct directional regimes means that the graph is not isotropic. This motivates the use of the term \emph{anisotropy}. The graph \(\Gn\) has multiple distinguished directional regimes:
\begin{itemize}[leftmargin=2em]
\item motion along the main chain;
\item motion along the left or right edge;
\item motion toward or away from the self-conjugate axis;
\item motion inside the central region and along the spine.
\end{itemize}
These regimes are not interchangeable at the level of description. They provide a language for asking whether the graph exhibits different combinatorial behavior in different directions.

The degree landscape and the simplex layers motivate this viewpoint. In the computed range, higher local mobility and richer local simplex structure tend to accumulate in the central body near the axis, whereas the outer framework remains more rigid and constrained. Thus the directional language introduced here is intended as a framework for later quantitative study rather than as a completed directional theory.

The spine introduces an additional refinement. Since it is built from the self-conjugate axis together with shortest bridges between neighboring self-conjugate vertices, it defines a preferred internal corridor system. Movement along the spine should be regarded as a distinguished type of axial motion: not just motion toward the center, but motion along the central skeletal scaffold itself.

For the descriptive purposes of the paper, we do not attempt to formalize all possible directional types. In particular, we do not yet separate different kinds of transverse motion, nor do we formalize a rear direction associated with the still provisional rectangular contour. What matters here is the broader conclusion: the partition graph is not directionally homogeneous. Its morphology singles out a small number of canonical directional tendencies, and these directional tendencies are closely tied to the emergence of center, framework, and spine.

This anisotropy is important for the overall program of the paper. It suggests that future invariants of partition graphs should not be studied only as scalar quantities attached to vertices, but also in terms of how they vary along distinguished directions. Degree, simplex complexity, support phenomena, and jump-type invariants may all behave differently along the longitudinal, lateral, axial, and spinal directions. The present section is only a first step toward a more systematic directional geometry of \(\Gn\).

\section{Growth and morphogenesis}

The preceding sections described the partition graph \(\Gn\) for fixed \(n\): its outer framework, its conjugation symmetry, its local simplex layers, its degree landscape, and its emerging central region and spine. We now turn to the family
\[
G_1,G_2,G_3,\dots
\]
as a whole.

\subsection{Comparative growth}

The guiding viewpoint of this section is morphogenetic. We are interested not only in the structure of an individual graph \(\Gn\), but in the way successive partition graphs exhibit the emergence of new morphological regimes as \(n\) varies. Growth is understood here comparatively. We do not impose a single fixed filtration
\[
\Gn\hookrightarrow G_{n+1}.
\]
At the same time, natural embeddings or overlays of smaller partition graphs into larger ones may well exist and deserve separate study; we do not attempt to formalize them here. What matters here is the comparative perspective: we study how structural features appear, separate, thicken, and interact across the family as \(n\) increases.

This viewpoint is already compatible with the broader program announced in the local paper \cite{LyuLocal}, where the first local layer was developed as a basis for the later study of larger-scale structures, central regions, core-like subgraphs, and growth phenomena as \(n\) varies.

\subsection{Broadening of local regimes}

A first aspect of morphogenesis is the emergence of new local regimes. The local theory shows that degree, neighborhood type, local clique number, and local simplex dimension are controlled by the ordered local transfer type of a partition. As \(n\) grows, the repertoire of such local transfer types becomes richer, and with it the variety of local environments that may occur inside \(\Gn\). Thus growth is not just an increase in the number of vertices, but also an increase in the diversity of local combinatorial patterns available to the graph.

A second aspect is the separation of large-scale structural roles. In the smallest graphs, the distinction between outer framework, central zone, and axial structure is still weak. As \(n\) increases, however, the antenna vertices, the main chain, the edges, the self-conjugate axis, and the central body become increasingly distinguishable from one another in the atlas. What is only a faint organizational tendency in the smallest cases becomes, in moderate values of \(n\), a visible large-scale architecture.

This can be tracked through several observable spectra. For example, one may consider the degree spectrum
\[
\Sigma_n^{\deg}:=\{\deg_{\Gn}(\lambda):\lambda\in V(\Gn)\}
\]
and the simplex spectrum
\[
\Sigma_n^{\mathrm{simp}}:=\{\locdim(\lambda):\lambda\in V(\Gn)\}.
\]
In the computed atlas of small partition graphs, both spectra broaden with \(n\). This means that growth is accompanied by the appearance of new levels of local mobility and new levels of local simplex richness. Morphologically, the graph develops a more articulated internal relief.

\subsection{Centralization and thickening}

A third aspect of morphogenesis is centralization. The atlas suggests that the richer local regimes do not appear uniformly throughout the graph. Rather, they tend to accumulate nearer the self-conjugate axis and inside the central body, while the outer framework remains comparatively constrained. Growth is not isotropic expansion in this sense. It is accompanied by the formation of a progressively more structured interior.

The same phenomenon can be described in terms of thickening. The narrow central region
\[
\Cn{1}
\]
and its first enlargement
\[
\Cn{2}
\]
become increasingly meaningful as \(n\) grows: they isolate not merely a few exceptional vertices, but a genuinely internal zone of the graph. Likewise, the spine becomes progressively more visible as an axial scaffold rather than as a purely formal definition. Thus the central morphology of \(\Gn\) is not present from the outset in full form; it emerges gradually.

A fourth aspect of morphogenesis is the stabilization of contrast between local and global complexity. On the one hand, the local simplex structure becomes richer and richer: higher-dimensional local clique patterns appear, and the degree landscape becomes more varied. On the other hand, the earlier topological work \cite{LyuHomotopy} shows that the global homotopy type of the clique complex remains strikingly simple: for every \(n\), \(\Kn=\Cl(\Gn)\) is homotopy equivalent to a wedge of \(2\)-spheres. The family \((G_n)\) therefore exhibits a characteristic tension between growing local complexity and stable global topological simplicity.

\subsection{Emergence thresholds}

It is useful to isolate the notion of first appearance.

\begin{definition}
Let \(\Phi\) be a graph-theoretic or morphological feature of partition graphs. If the set
\[
\{n\ge 1: G_n \text{ has feature } \Phi\}
\]
is nonempty, then its least element is called the \emph{emergence threshold} of \(\Phi\).
\end{definition}

This language is intentionally broad. A feature \(\Phi\) may be the existence of a given simplex layer, the appearance of a new degree value, the nonemptiness of a central region of a certain kind, the appearance of a nontrivial spine bridge, or the first occurrence of a particular local transfer type. The point is not that all such thresholds should be determined here, but that morphogenesis in the family \((G_n)\) can be studied systematically through them.

From this point of view, the small-graph atlas should be read not just as a collection of examples, but as a first developmental chart. It shows that several phenomena arise in stages. First come the most rigid extremal features: antenna vertices and the hook-like main chain. Then lateral differentiation becomes visible through the edges. After that, richer local simplex behavior and a nontrivial central body begin to emerge. Finally, the axial scaffold represented by the spine becomes identifiable as a distinct structural object inside the interior.

Of course, not every morphological quantity is monotone in a strict sense as \(n\) varies. The family of partition graphs is too intricate for such a naive picture. The claim is not that every observable grows monotonically, but that the global repertoire of structural regimes becomes richer. New local types appear, the interior becomes more articulated, and the contrast between center and periphery becomes more pronounced.

This is the sense in which the partition graph should be viewed as a growing discrete geometric object. Its growth is not merely numerical. It also involves the emergence of new zones, new directional tendencies, new local regimes, and new internal scaffolds across the family.

For the purposes of the paper, the main role of this morphogenetic viewpoint is organizational. It allows us to regard the structures introduced earlier---framework, axis, simplex layers, degree landscape, central region, and spine---as successive ingredients in a developmental picture of the family \((G_n)\).

\section{Atlas of small partition graphs}

The structures introduced in the preceding sections---antenna vertices, main chain, left and right edges, boundary framework, self-conjugate axis, simplex layers, degree landscape, central region, and spine---are most easily understood not in the abstract, but through the first members of the family
\[
G_1,G_2,\dots,G_{12}.
\]
We therefore include a small computational atlas of partition graphs. Its purpose is illustrative and exploratory rather than evidential in any asymptotic sense. It records the first visible morphological patterns in the range \(1\le n\le 12\), and it serves as a compact reference for the structures introduced above. For readability, each atlas is presented as a short figure series, with separate pages for the ranges \(1\le n\le 4\), \(5\le n\le 8\), and \(9\le n\le 12\), together with a larger focused view of \(G_{12}\).

\subsection{Purpose of the atlas}

The structural atlas series displays the outer framework of each graph: the antenna vertices, the hook-like main chain, the left and right edges, and the position of the self-conjugate vertices. The degree atlas series shows how local mobility is distributed in the computed range. The simplex-layer atlas series shows where richer local clique structure first becomes visible. Finally, the central-region/spine overlay series and the larger focused view of \(G_{12}\) display the emergence of an internal organization distinct from the outer framework. None of these figures is intended as a substitute for proof or as asymptotic evidence; their role is to make the small-range geometry legible and to motivate later quantitative questions.

At the smallest values of \(n\), the graphs are too small for a clear separation of morphological roles. The graph is still dominated by its most rigid outer features, and the distinction between framework, center, and spine is weak or degenerate. Nevertheless, even in these earliest cases the essential ingredients are already present in embryonic form: the two antenna vertices, the hook-like longitudinal chain between them, and the conjugation symmetry exchanging the two ends.

As \(n\) increases, the boundary framework becomes easier to recognize. The main chain acquires visible length, and the two side edges begin to separate from it as distinct lateral structures. At this stage one can already speak of an outer framework rather than just a path with small local perturbations.

\subsection{How to read the atlas}

All atlas figures use the same plotting convention. The dashed vertical line marks the locus \(\lambda_1-\ell(\lambda)=0\), so conjugate partitions appear on opposite sides of this axis and self-conjugate partitions lie on it. The remaining vertical offsets are chosen only to separate vertices and edges and to make local structure legible. Thus the atlas should be read as a graph-theoretic picture with a fixed left/center/right organization, not as a claim about intrinsic Euclidean coordinates.

A further stage is marked by the appearance of visibly richer local regimes in the interior. The degree atlas and the simplex-layer atlas show that local complexity does not spread uniformly across the graph. Instead, larger degrees and higher simplex layers begin to appear preferentially away from the extremal framework and nearer the self-conjugate zone. This is the first clear sign that the graph has developed an internal body rather than merely an outer shell.

The next stage is axial consolidation. Once the central body becomes distinguishable, the self-conjugate vertices cease to look like isolated symmetric curiosities and begin to act as the organizing core of an interior zone. The axial central regions
\[
\Cn{1},\qquad \Cn{2}
\]
then become visually meaningful, and the chosen canonical spine becomes visually aligned with a nontrivial axial scaffold rather than appearing as an arbitrary graph-theoretic subset.

In this way, the atlas suggests a developmental sequence:
\begin{enumerate}[label=\arabic*.,leftmargin=2em]
\item \emph{antennal phase}: the graph is dominated by the two extremal endpoints and the shortest hook-like connection between them;
\item \emph{framework phase}: the main chain and the side edges become visibly distinct as an outer boundary framework;
\item \emph{interior differentiation phase}: degree and simplex-layer variation begin to concentrate away from the outer framework;
\item \emph{axial consolidation phase}: the central region and the spine emerge as identifiable internal structures.
\end{enumerate}
These phases are not intended as sharply separated theorems, nor do they imply strict monotonicity for every observable. Rather, they are a descriptive way of reading the early morphology of the family.

\subsection{Small-range observations}

The atlas is also useful methodologically. Many of the notions introduced earlier in the paper are intentionally structural and only partly rigid at this stage. The atlas allows one to test whether these notions are visually and conceptually meaningful.

\begin{observation}
In the computed range \(1\le n\le 12\), visual inspection of the atlas suggests that the distinction between outer framework and interior becomes progressively clearer, and that higher local complexity tends to concentrate nearer the axial center than near the extremal framework.
\end{observation}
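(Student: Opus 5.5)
The Observation is an empirical statement about the range \(1\le n\le 12\), so I would turn it into a finite, checkable claim and verify it by exhaustive computation. Two quantities would replace "visual inspection." The first is the framework/interior contrast: for each \(n\), compare the mean and maximum of \(\deg_{\Gn}\) and \(\locdim\) on \(\Fn\) with the same statistics on \(V(\Gn)\setminus\Fn\). The second is axial concentration: for each value \(k\) of \(\axdist\), record the average degree and average local simplex dimension of the vertices at axial distance \(k\). Under this reading, the claim becomes two statements. First, the interior averages dominate the framework averages once the interior is nonempty, and the gap grows with \(n\). Second, the layer-wise averages are essentially nonincreasing in \(k\), at least after \(\Fn\) is excluded.

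The computation itself is routine. For each \(n\le 12\) (at most \(77\) partitions), I would enumerate the partitions and build \(\Gn\) from the elementary-transfer rule. I would compute degrees directly. Rather than computing \(\locdim\) through the clique complex, I would take it from the transfer-type formulas of \cite{LyuLocal}, and cross-check it against a brute-force maximal-clique search, which is cheap at this size. I would compute \(\SC_n\), then \(\axdist\) by a multi-source breadth-first search, and then \(\Fn\), \(\Cn{1}\), \(\Cn{2}\) and the tabulated averages. Conjugation symmetry (Proposition~\ref{prop:conjugation-automorphism}) halves the checks and serves as a sanity test, since every statistic must be conjugation-invariant. The staircase computation \(\deg(\delta_t)=t(t-1)\) gives an independent spot check at \(n=3,6,10\).

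Some of the sharpness can be made provable. Proposition~\ref{prop:antennas-degree-one} and the explicit neighborhoods of \((n-1,1)\) and the two-part partitions show that framework vertices have degree at most about \(4\). On the axis side, I would exhibit, for each \(n\ge 6\) in range, a self-conjugate or axially adjacent partition whose transfer type forces a larger degree, using the local formulas. This gives at least a rigorous ``maximum over the interior near the axis exceeds maximum over the framework'' version of the observation.

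The main obstacle is small-\(n\) degeneracy and the fact that the claim is not strictly monotone. For \(n=2\), \(n=3\), and for the \(n\) with \(\SC_n=\varnothing\) (namely \(n=2\); note \(\SC_n\) is nonempty otherwise), the interior is empty or tiny and \(\axdist\) is infinite or trivial. Individual framework vertices such as the rectangles \((r^m)\) may also have \(\locdim=2\), matching some interior vertices. I would therefore state the verified claim only for averages and for \(n\ge 5\) or so, and explicitly list the exceptional small cases rather than force a uniform statement. The word ``progressively'' would be supported by the tabulated gaps rather than proved.
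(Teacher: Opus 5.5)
The paper gives no argument for this Observation beyond visual inspection of the atlas and the appendix tables. Turning it into computed statistics is therefore a legitimate and more informative route, and your rigorous fragment is sound. Every vertex of \(\Fn\) has degree at most \(4\) and \(\locdim\le 2\). For \(n=6,\dots,12\), the partitions \((3,2,1)\), \((4,2,1)\), \((4,2,1,1)\), \((5,2,1,1)\), \((4,3,2,1)\), \((4,3,3,1)\), \((5,3,2,1,1)\) lie at axial distance at most \(1\) and have degrees \(6,7,8,8,12,8,14\).

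The gap is that the two averaged statements you propose to certify are false in the computed range. Your computation would refute your version rather than confirm it.

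\emph{Monotonicity in axial distance fails at the first step.} Take \(n=9\). Here \(\SC_9=\{(5,1^4),(3,3,3)\}\), and the hook lies on \(M_9\). So after removing \(F_9\), the \(\axdist=0\) layer is just \((3,3,3)\), which has degree \(2\) and \(\locdim=2\). The interior vertices at axial distance \(1\) are \((5,2,1,1),(4,2,1^3),(4,3,2),(3,3,2,1)\), with degrees \(8,8,7,7\) and \(\locdim=3\). At \(n=8\), the interior axial layer \(\{(4,2,1,1),(3,3,2)\}\) has average degree \(6\) and average \(\locdim\) equal to \(5/2\). The six other interior vertices all lie at axial distance \(1\) and average \(19/3\) and \(3\). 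Even if you keep \(\Fn\), the average degree still rises from \(\axdist=0\) to \(\axdist=1\) at \(n=4,7,9\).

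\emph{The growing-gap claim fails too.} Mean interior degree minus mean framework degree is \(6-28/10=3.2\), \(22/4-34/11\approx 2.41\), \(50/8-44/14\approx 3.11\), and \(96/15-50/15\approx 3.07\) for \(n=6,7,8,9\).

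These are not small-\(n\) degeneracies, and your diagnosis of the obstacle points the wrong way. A rectangle \((r^m)\) lies in \(\Fn\) only when \(r\le 2\) or \(m\le 2\), in which case it is in \(\Mn\), \(\Ln\) or \(\Rn\). For \(r,m\ge 3\) it is interior, and for \(r=m\) it is self-conjugate. There is also the self-conjugate hook \((k+1,1^k)\) at \(n=2k+1\), which sits on \(\Mn\) with degree at most \(4\). Near-square self-conjugate partitions such as \((2,2)\), \((3,3,2)\), \((3,3,3)\) have degrees \(2,4,2\). So the axis itself carries some of the sparsest vertices, and local complexity peaks one step off the axis rather than on it.

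What the Observation actually asserts is a center-versus-periphery contrast (``nearer the axial center than near the extremal framework'') plus a trend, and the paper explicitly disclaims strict monotonicity. A verifiable formalization should therefore do one of the following:
\begin{itemize}
\item compare statistics on \(\Cn{1}\) or \(\Cn{2}\) with those on \(\Fn\); for example, at \(n=9\) the average degree is \(32/5\) on \(\mathcal{C}_9^{(1)}\) against \(10/3\) on \(F_9\);
\item check that the maximizers of degree and \(\locdim\) lie in \(\Cn{R}\) for some small \(R\).
\end{itemize}
In either case, record the dips at \(n=7,9\) rather than claiming a monotone gap.
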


Concretely, within this computed range, the atlas makes the following small-range features visible:
\begin{itemize}[leftmargin=2em]
\item the distinction between outer framework and interior is already useful in the first twelve cases;
\item higher local complexity often appears nearer the axial center than near the extremal framework;
\item the central region is not simply the complement of the framework, but a visibly coherent zone;
\item the chosen canonical spine is visually aligned, in the computed range, with an axial corridor system rather than scattered across the whole interior.
\end{itemize}

It is worth emphasizing that the atlas is not meant to replace formal arguments. Rather, it plays the role of a structural reference system. The morphology of partition graphs becomes considerably easier to describe once one has a developmental gallery of the first cases. In that sense, the atlas is part of the present foundational picture: it is the small-range testing ground in which the language of the paper acquires its first concrete use.

The figures accompanying this section should be read comparatively rather than individually. The main mathematical content of the atlas does not lie in any single small graph, but in the visible progression across the family. What matters is not only what \(G_8\) or \(G_{12}\) looks like in isolation, but how the sequence
\[
G_1,G_2,\dots,G_{12}
\]
reveals the progressive articulation of framework, center, and spine.

For this reason, the atlas may be regarded as the first computational layer of the morphogenetic program. It does not resolve structural questions by itself, but it identifies the objects that later work should measure more precisely.

\clearpage

\subsection{Figures}
\medskip

\begin{center}
\includegraphics[width=\textwidth,height=.80\textheight,keepaspectratio]{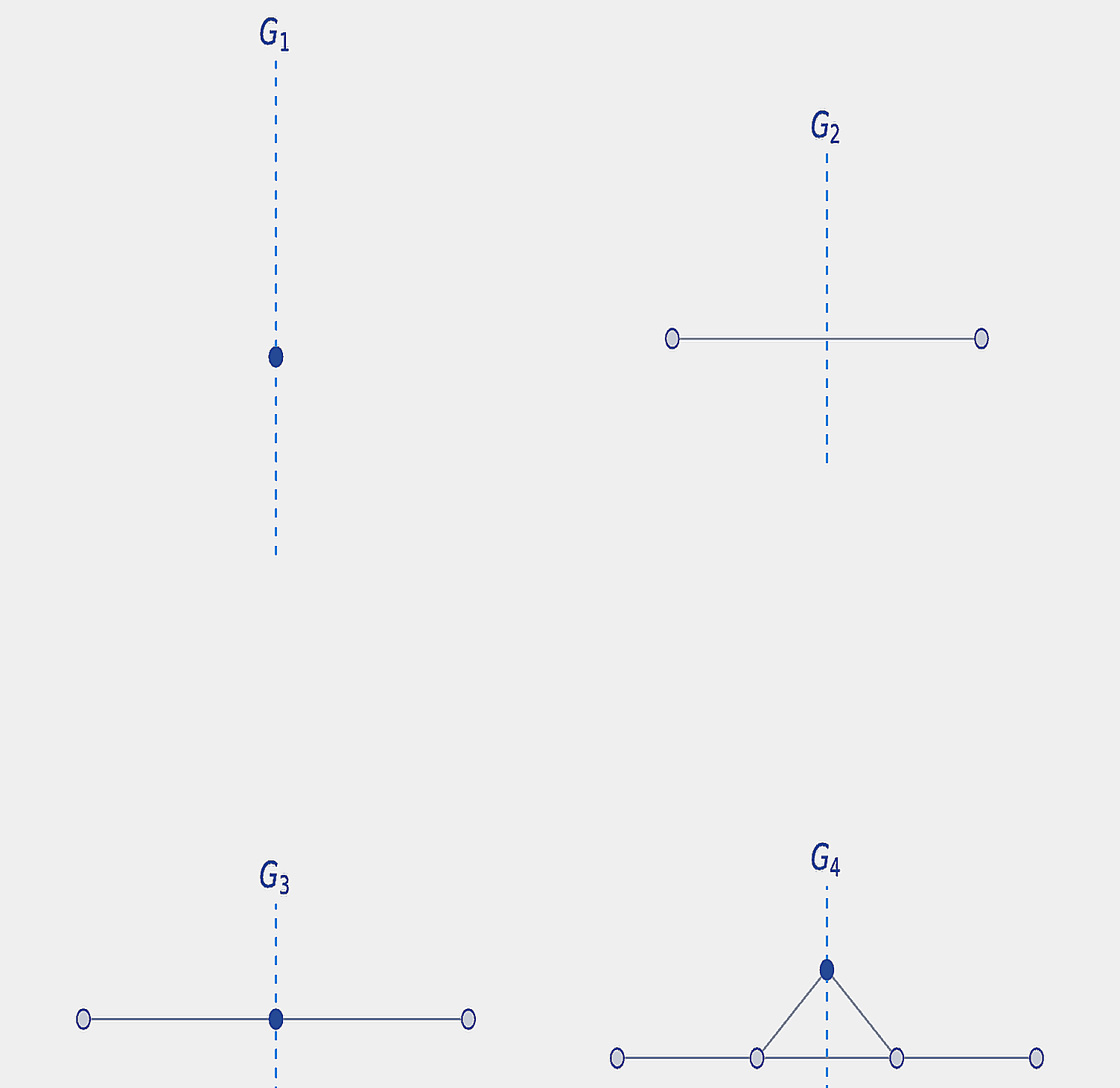}\par\medskip
\captionof{figure}{Structural atlas, Part I: \(G_1,\dots,G_4\). Boundary-framework vertices are lightly tinted, interior vertices are white, self-conjugate vertices are dark blue, and the dashed line marks the self-conjugate axis.}
\label{fig:atlas-structure-1}
\end{center}

\begin{figure}[p]
\centering
\includegraphics[width=\textwidth,height=.83\textheight,keepaspectratio]{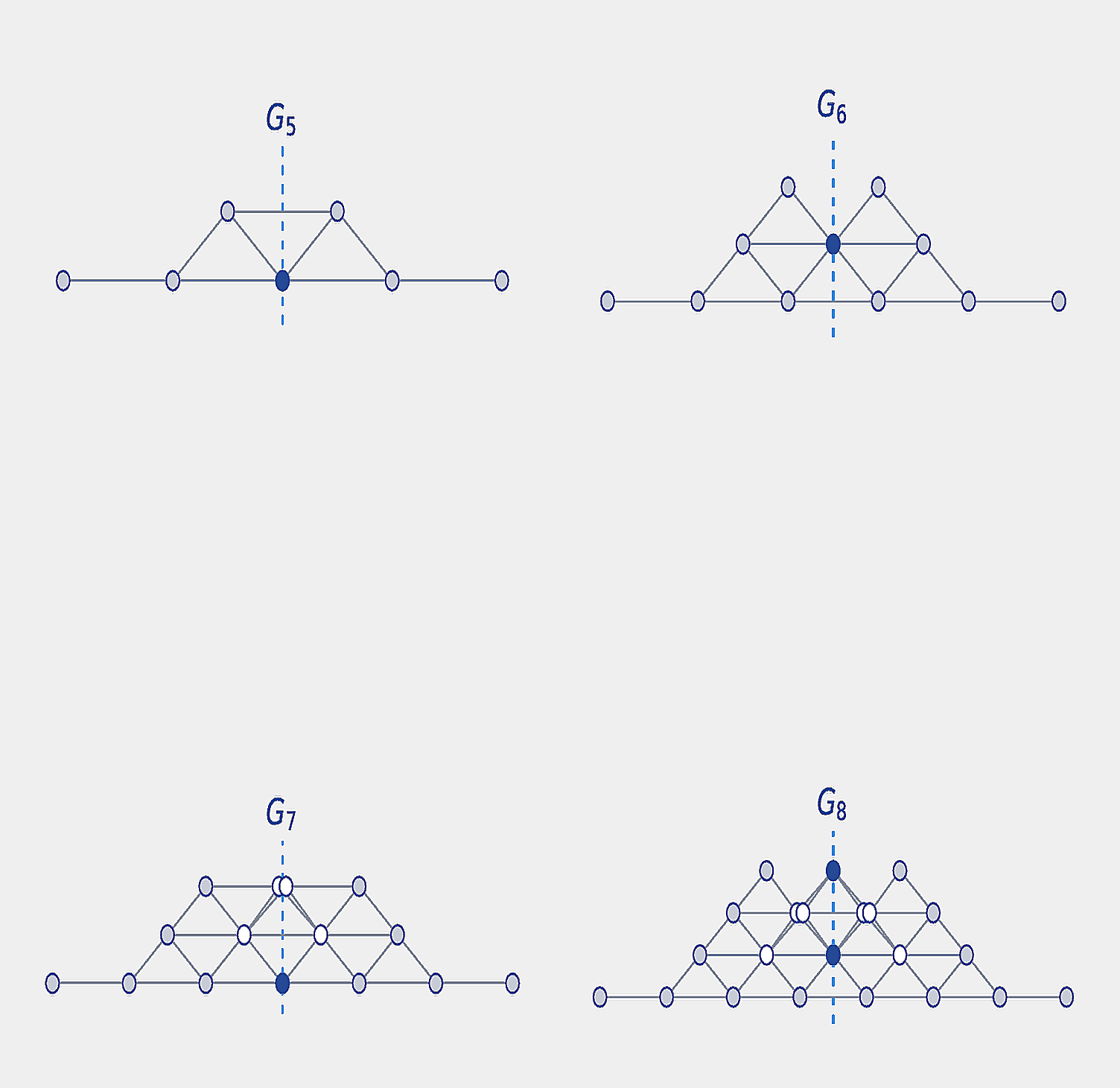}
\caption{Structural atlas, Part II: \(G_5,\dots,G_8\).}
\label{fig:atlas-structure-2}
\end{figure}

\begin{figure}[p]
\centering
\includegraphics[width=\textwidth,height=.83\textheight,keepaspectratio]{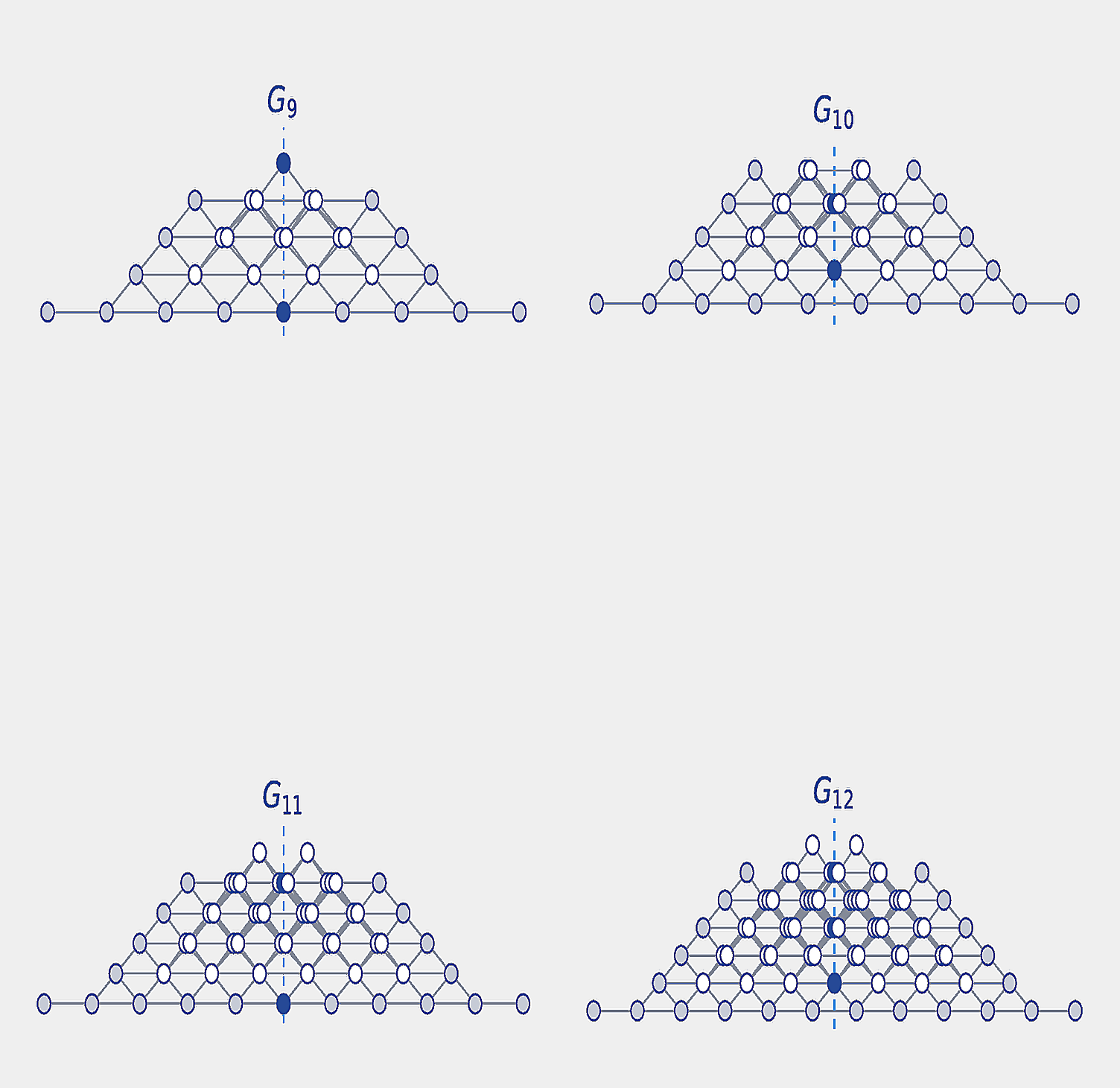}
\caption{Structural atlas, Part III: \(G_9,\dots,G_{12}\).}
\label{fig:atlas-structure-3}
\end{figure}

\begin{figure}[p]
\centering
\includegraphics[width=\textwidth,height=.83\textheight,keepaspectratio]{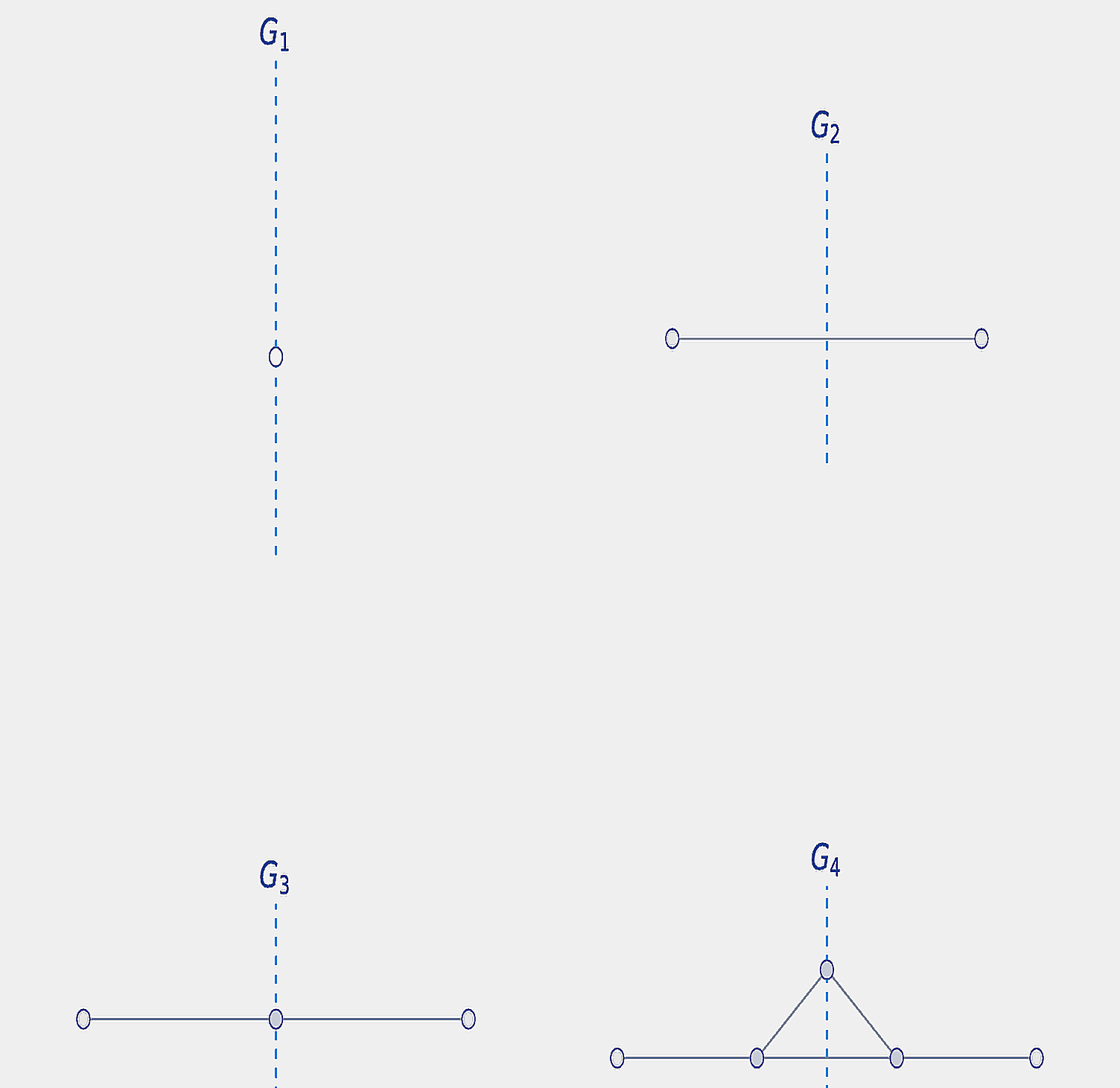}
\caption{Degree atlas, Part I: \(G_1,\dots,G_4\). Node color indicates degree, with a common scale shared across all degree-atlas pages.}
\label{fig:atlas-degree-1}
\end{figure}

\begin{figure}[p]
\centering
\includegraphics[width=\textwidth,height=.83\textheight,keepaspectratio]{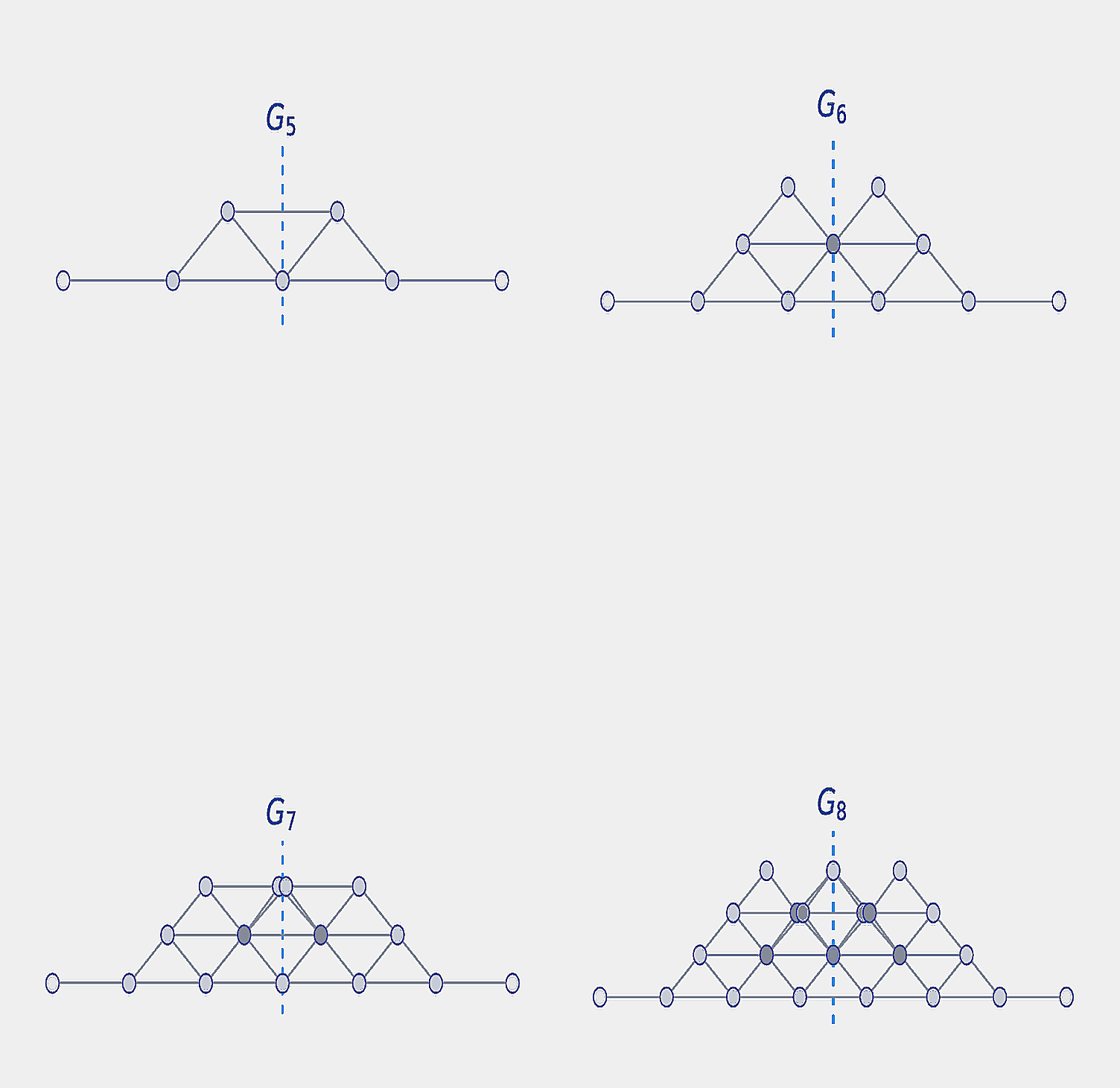}
\caption{Degree atlas, Part II: \(G_5,\dots,G_8\).}
\label{fig:atlas-degree-2}
\end{figure}

\begin{figure}[p]
\centering
\includegraphics[width=\textwidth,height=.83\textheight,keepaspectratio]{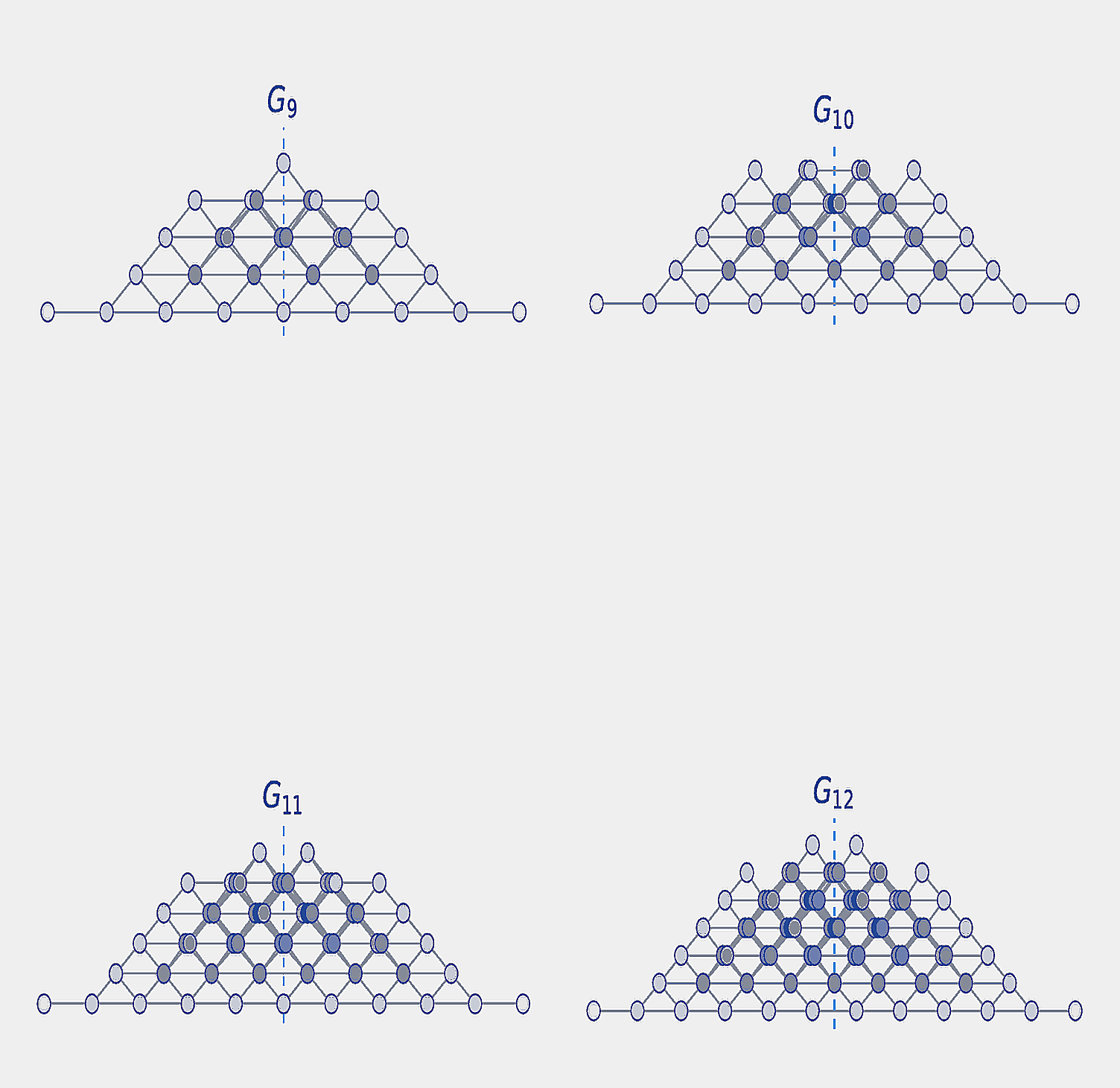}
\caption{Degree atlas, Part III: \(G_9,\dots,G_{12}\).}
\label{fig:atlas-degree-3}
\end{figure}

\begin{figure}[p]
\centering
\includegraphics[width=\textwidth,height=.83\textheight,keepaspectratio]{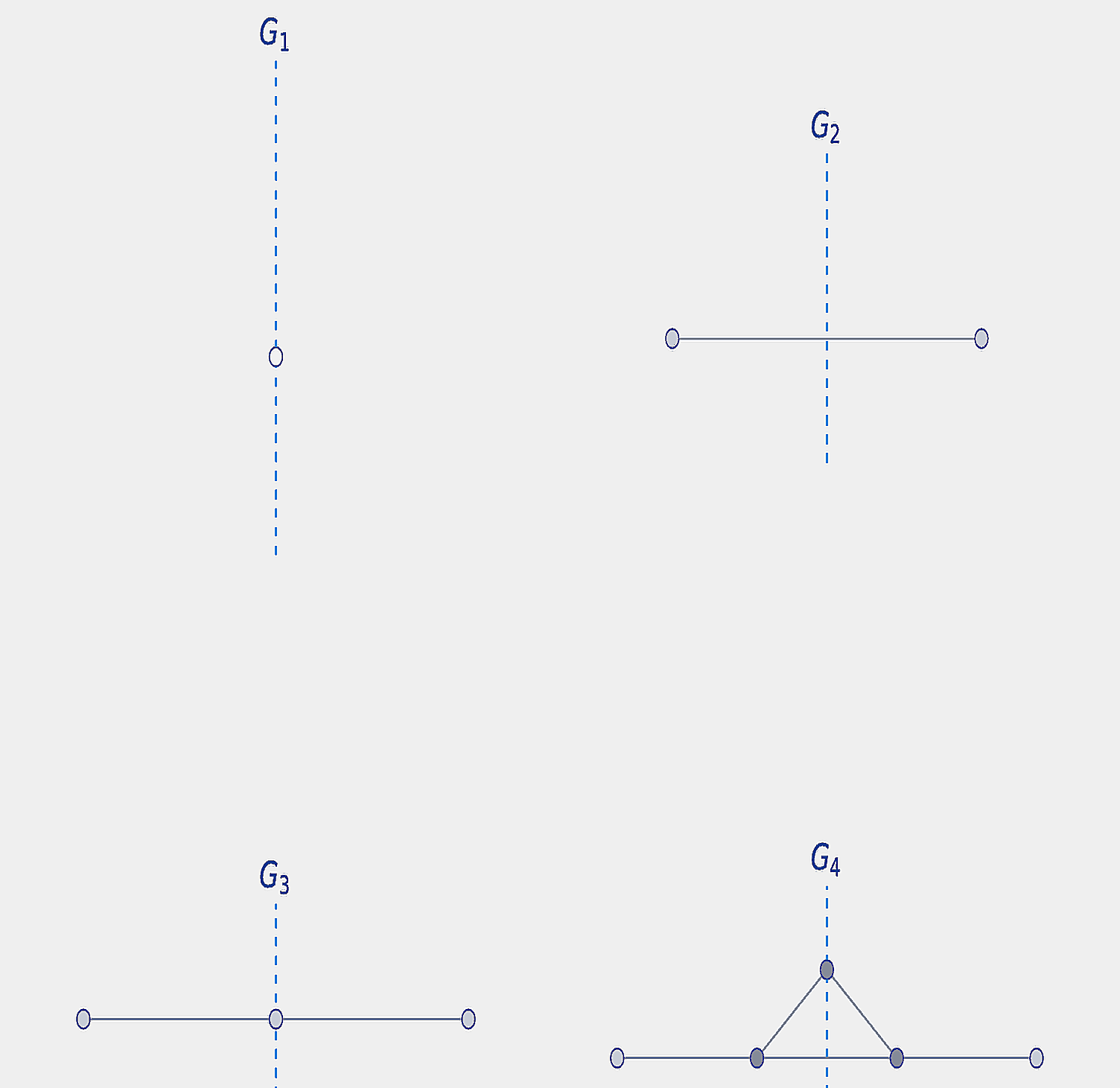}
\caption{Simplex-layer atlas, Part I: \(G_1,\dots,G_4\). Node color indicates local simplex dimension, with a common scale shared across all simplex-atlas pages.}
\label{fig:atlas-simplex-1}
\end{figure}

\begin{figure}[p]
\centering
\includegraphics[width=\textwidth,height=.83\textheight,keepaspectratio]{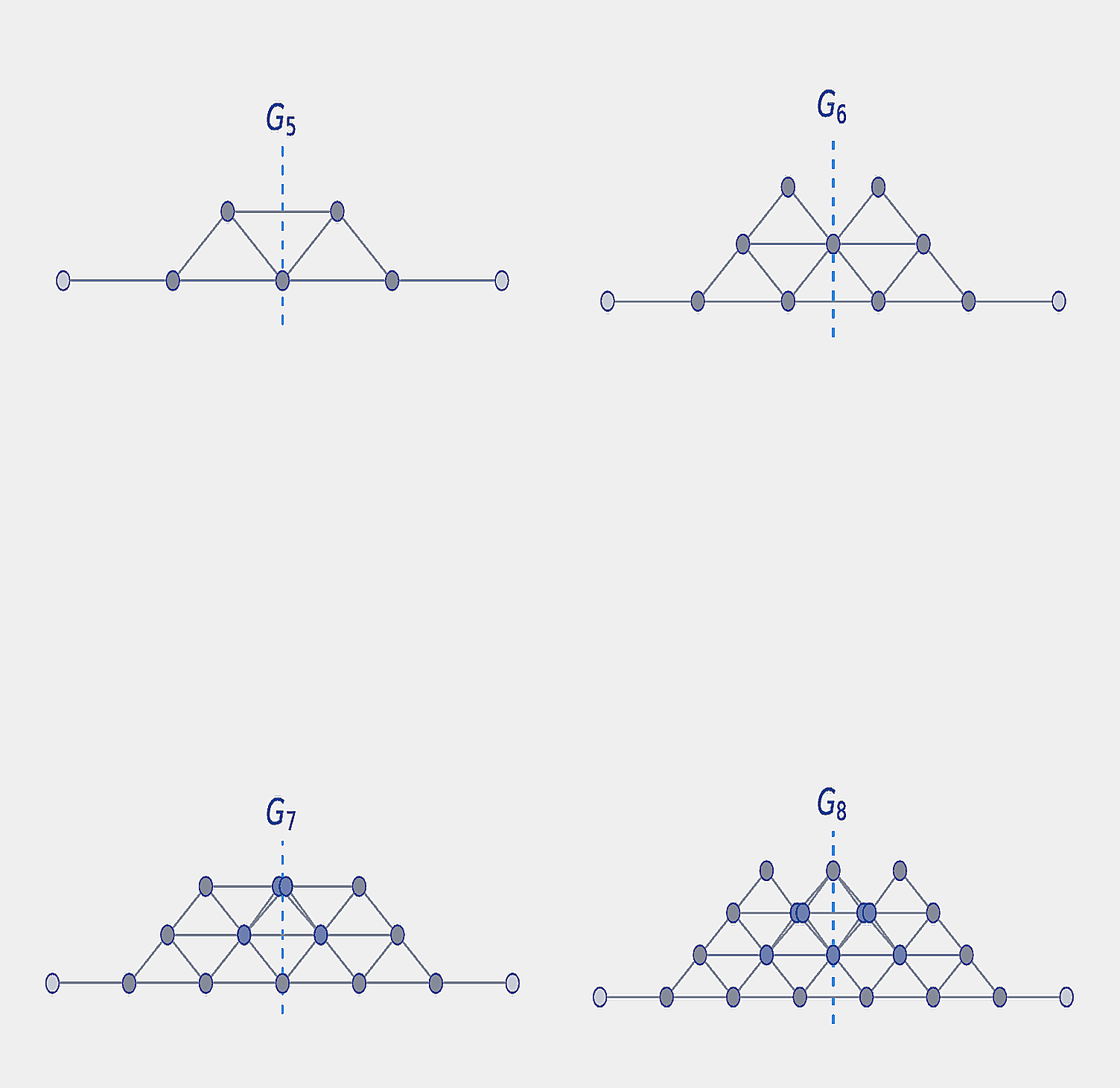}
\caption{Simplex-layer atlas, Part II: \(G_5,\dots,G_8\).}
\label{fig:atlas-simplex-2}
\end{figure}

\begin{figure}[p]
\centering
\includegraphics[width=\textwidth,height=.83\textheight,keepaspectratio]{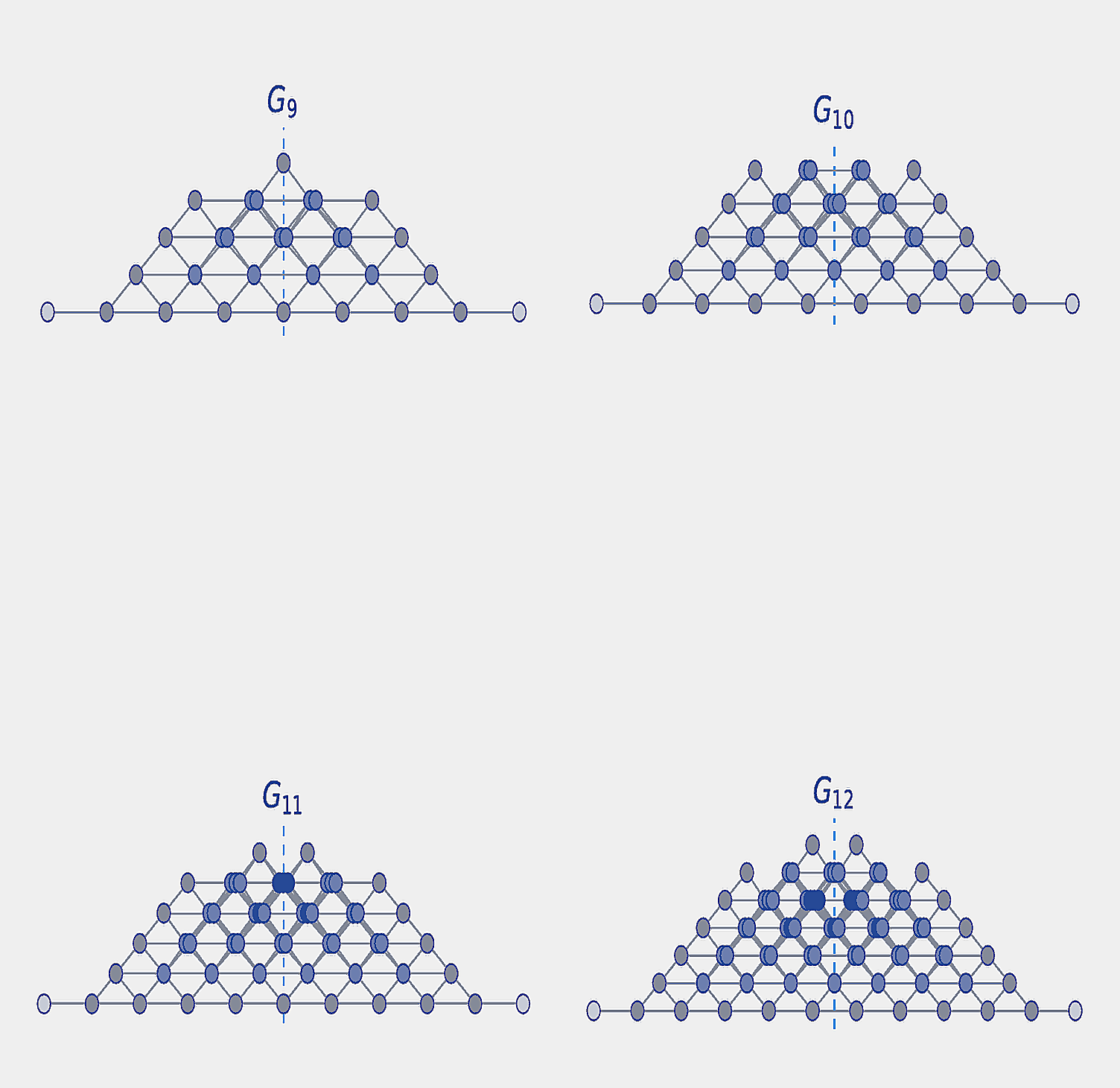}
\caption{Simplex-layer atlas, Part III: \(G_9,\dots,G_{12}\).}
\label{fig:atlas-simplex-3}
\end{figure}

\begin{figure}[p]
\centering
\includegraphics[width=\textwidth,height=.83\textheight,keepaspectratio]{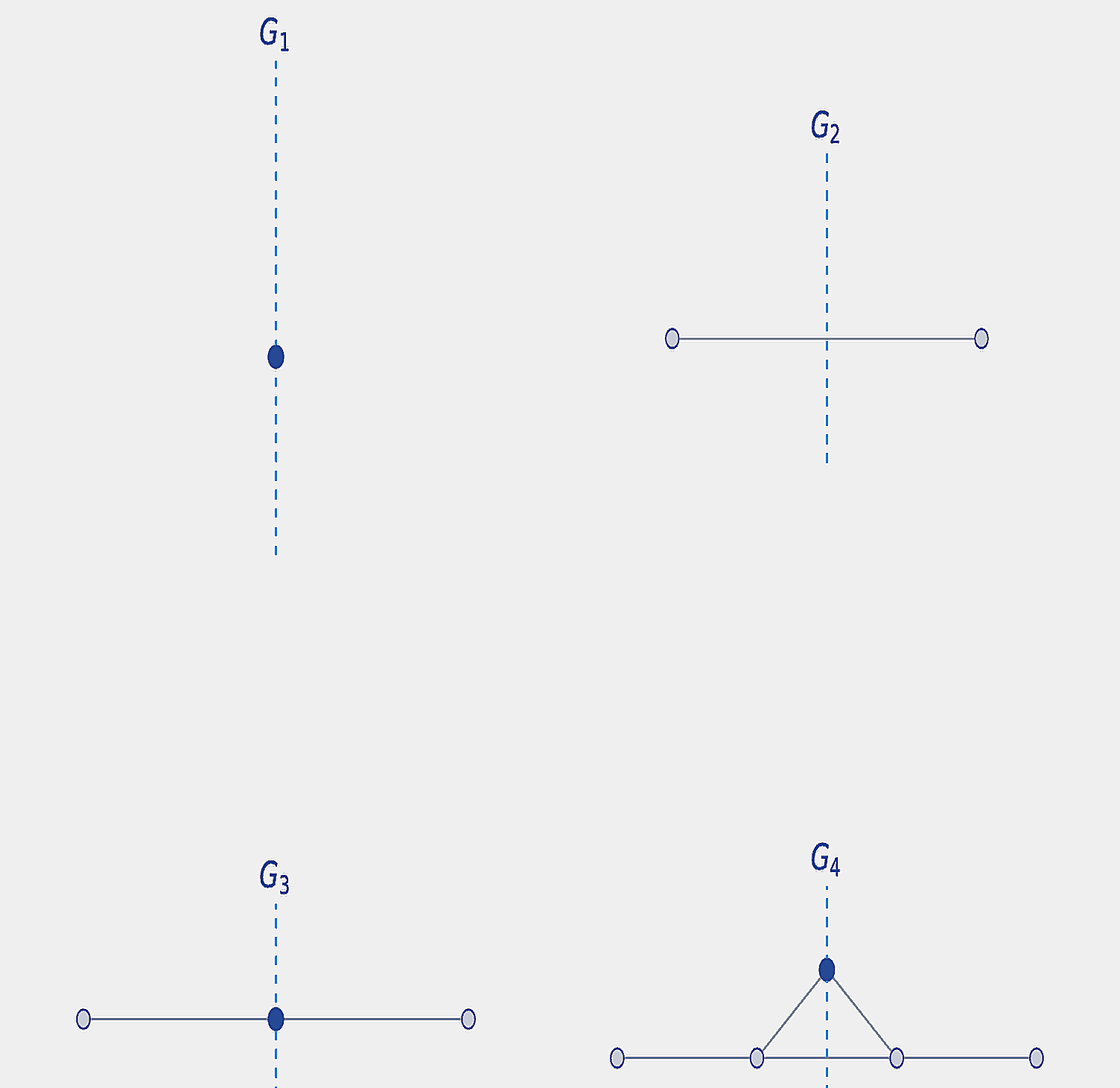}
\caption{Central-region/spine overlay, Part I: \(G_1,\dots,G_4\). Boundary vertices are light gray, \(\mathcal C_n^{(2)}\setminus \mathcal C_n^{(1)}\) is dark gray, \(\mathcal C_n^{(1)}\) is silver, self-conjugate vertices are black, and spine vertices are marked by a thick outline.}
\label{fig:atlas-central-spine-1}
\end{figure}

\begin{figure}[p]
\centering
\includegraphics[width=\textwidth,height=.83\textheight,keepaspectratio]{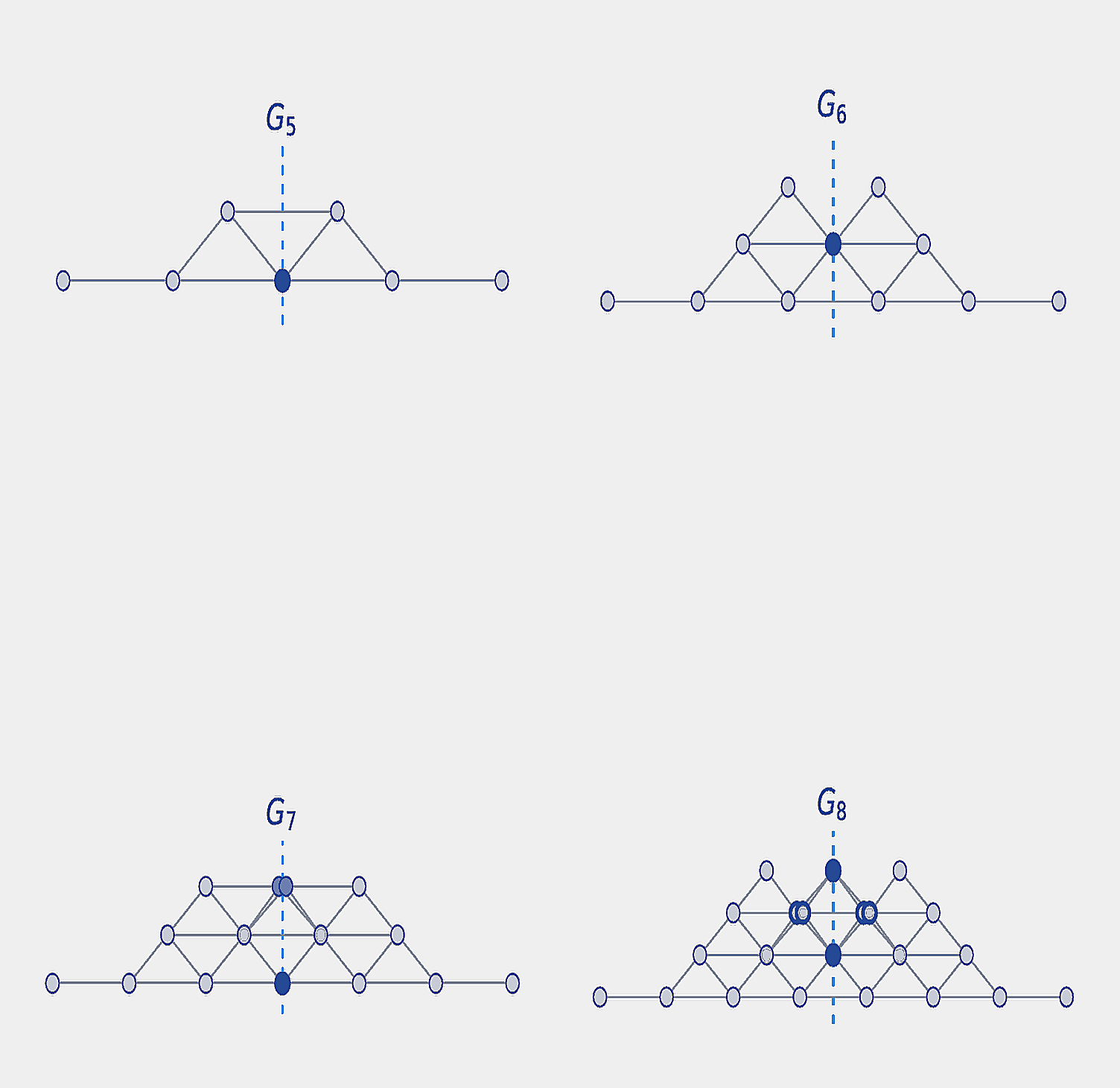}
\caption{Central-region/spine overlay, Part II: \(G_5,\dots,G_8\).}
\label{fig:atlas-central-spine-2}
\end{figure}

\begin{figure}[p]
\centering
\includegraphics[width=\textwidth,height=.83\textheight,keepaspectratio]{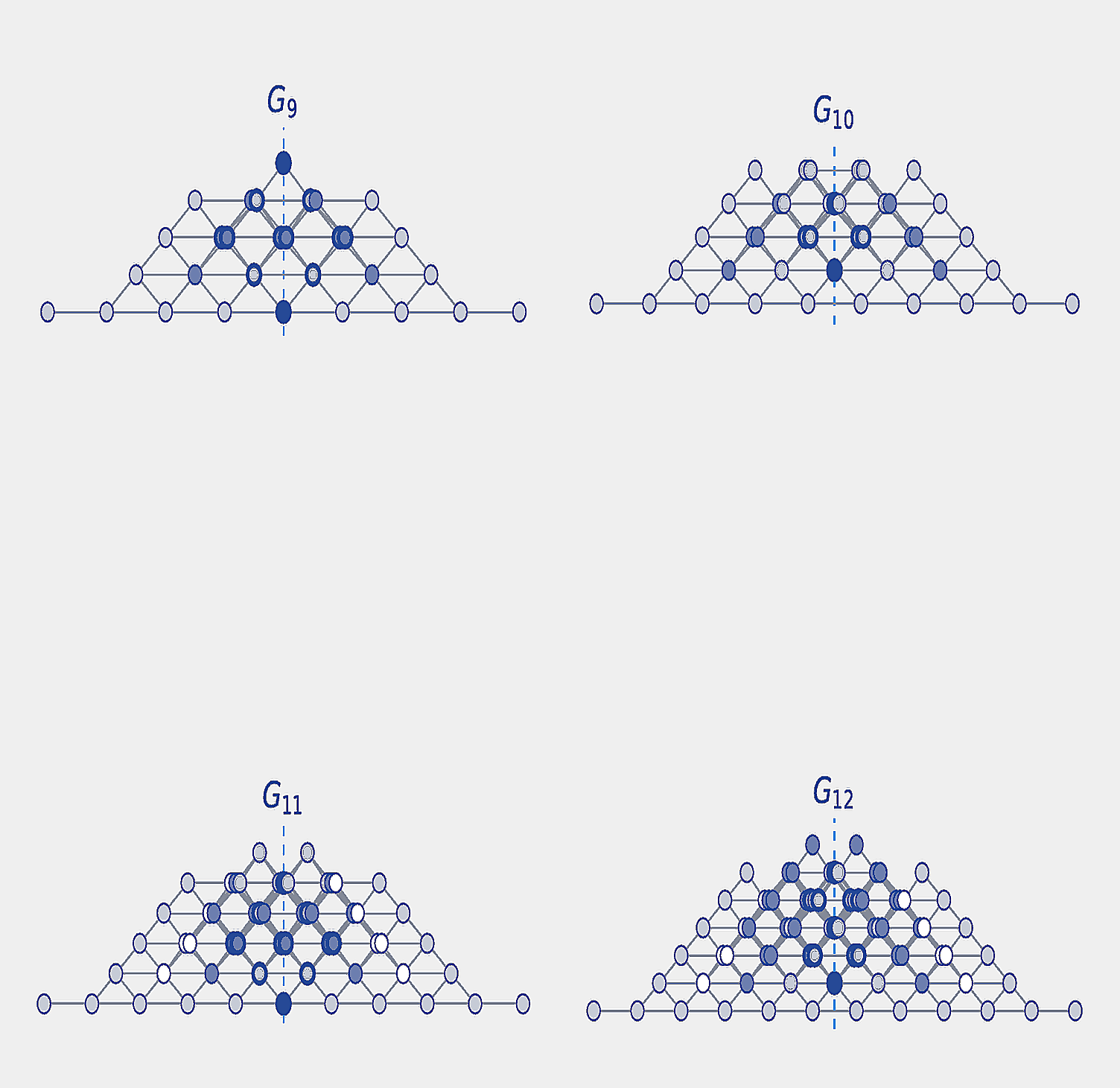}
\caption{Central-region/spine overlay, Part III: \(G_9,\dots,G_{12}\).}
\label{fig:atlas-central-spine-3}
\end{figure}

\begin{figure}[p]
\centering
\includegraphics[width=\textwidth,height=.83\textheight,keepaspectratio]{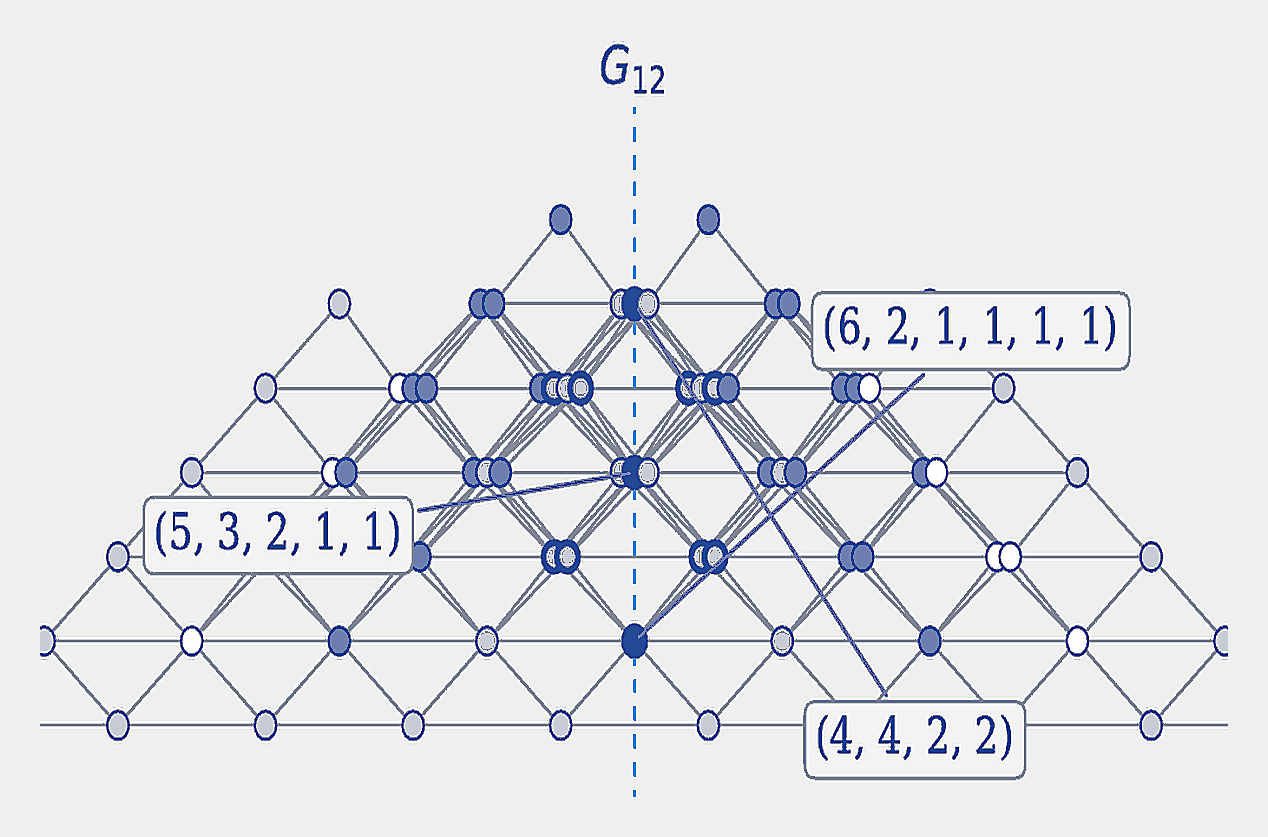}
\caption{Focused view of \(G_{12}\). The three self-conjugate partitions are labeled outside the graph and connected to their vertices by leader lines.}
\label{fig:g12-focus}
\end{figure}

\FloatBarrier

\section{Heuristic principles, open questions, and further directions}

This paper is intended as a structural foundation rather than a final classification of the family. The notions introduced here---framework, axis, simplex layers, degree landscape, central region, and spine---naturally lead to heuristic principles, precise questions, and more focused directions for later work. In this section we isolate the main ones.

\subsection{Framework versus interior}

A first recurring theme of the paper is the distinction between the outer framework and the internal body of the graph. The atlas suggests that this distinction becomes progressively sharper as \(n\) grows. The outer framework remains comparatively rigid and sparse, while the interior becomes increasingly differentiated.

\begin{heuristic}
As \(n\) increases, the contrast between the boundary framework and the internal body of \(\Gn\) should become progressively more pronounced, both in degree distribution and in local simplex layering.
\end{heuristic}

At this stage, this should be read as a guiding structural expectation rather than as a fully formal asymptotic statement. Its eventual precise form may involve degree spectra, simplex spectra, or concentration statements relative to the central regions \(\Cn{r}\).

\subsection{Central concentration}

A second major theme is the apparent concentration of richer local regimes near the self-conjugate axis. In the computed range, higher local simplex layers and larger degrees tend to occur not on the extremal framework, but nearer the axial central zone.

The present framework suggests the following precise questions.

\begin{question}
Does there exist \(R\ge 1\) such that, for all sufficiently large \(n\), every vertex of maximal local simplex dimension belongs to \(\Cn{R}\)?
\end{question}

\begin{question}
Does there exist \(R\ge 1\) such that, for all sufficiently large \(n\), every vertex of maximal degree belongs to \(\Cn{R}\)?
\end{question}

Near-maximal variants of these questions could also be studied, but the formulations above already isolate a first graph-theoretic version of the central-concentration principle. They are recorded here as concrete targets for later work rather than as claims supported by extensive evidence.

\subsection{Spine problems}

The notion of spine introduced in this paper is intentionally canonical but still preliminary. It is defined from the self-conjugate axis and shortest bridges between neighboring self-conjugate vertices in a chosen canonical order. The atlas suggests that this construction is often visually aligned with an axial corridor system of genuine structural interest. At the same time, it is likely not the only meaningful axial skeleton.

\begin{problem}
Determine whether the spine \(\Spine_n\) admits thinner canonical variants, such as a minimal spine, a median spine, or a support-adapted spine.
\end{problem}

\begin{problem}
Study the metric and combinatorial thickness of the spine as \(n\) grows.
\end{problem}

\begin{problem}
Understand how degree, simplex layers, and future support-based invariants are distributed relative to the spine.
\end{problem}

\subsection{Rectangular contour}

The atlas also suggests the presence of a further extremal structure associated with rectangular partitions. Unlike the main chain and the edges, this structure does not yet fit comfortably into the current formal framework. It behaves more like a discontinuous contour than like a simple path.

\begin{problem}
Define and analyze the rectangular contour of \(\Gn\), and determine whether it gives rise to a canonical rear edge or rear boundary layer.
\end{problem}

This direction appears especially promising, since rectangular partitions often occupy a morphologically distinguished position while not belonging to the already formalized outer framework.

\subsection{Embeddings and overlays}

Here growth has been treated comparatively rather than through a single fixed filtration. However, the family of partition graphs appears to admit natural embeddings or overlays of \(\Gn\) into \(G_{n+1}\), and perhaps into larger graphs in multiple non-equivalent ways. These overlays may intersect when repeated, and they seem to encode a nontrivial form of self-similarity across the family.

\begin{problem}
Construct and study natural embeddings or overlays of \(\Gn\) into \(G_{n+1}\) and, more generally, into \(G_{n+k}\).
\end{problem}

\begin{problem}
Analyze how repeated overlays intersect, and whether they induce a meaningful recursive or self-similar structure on the family \((G_n)\).
\end{problem}

\subsection{Future specialized theories}

Here degree and simplex layers are treated only at the level of large-scale morphology. But both are part of a broader local-to-global program already implicit in \cite{LyuLocal}, where the ordered local transfer type determines the neighborhood graph, local clique number, degree, and local simplex dimension.

\begin{problem}
Develop a systematic theory of the degree landscape: spectra, extremal vertices, concentration phenomena, and asymptotic growth.
\end{problem}

\begin{problem}
Develop a theory of support and support jumps: how support size shapes local and global morphology, and how changes in support interact with the directional geometry of the graph.
\end{problem}

\begin{problem}
Develop a theory of jump-type or gradient-type invariants describing local morphological transitions across the graph.
\end{problem}

A final theme concerns the relation between the present morphological picture and the earlier global topological result on the clique complex \(\Kn=\Cl(\Gn)\). The present paper emphasizes the emergence of richer local simplex structure, more articulated degree landscapes, and more visible internal organization as \(n\) grows. Yet \cite{LyuHomotopy} shows that the global homotopy type remains extremely simple: \(\Kn\) is always homotopy equivalent to a wedge of \(2\)-spheres.

\begin{problem}
Explain more conceptually why increasing local and morphological complexity in \(\Gn\) coexists with the stable global topological simplicity of \(\Kn\).
\end{problem}

\subsection{Final perspective}

The paper should be understood as foundational rather than terminal. Its main contribution is not a single maximal theorem, but a structural language together with a first developmental picture. It identifies a family of canonical objects inside the partition graph and argues that these objects are best understood not in isolation, but as parts of an emerging morphology: outer framework, symmetric axis, local simplex layers, degree landscape, central region, and spine.

The next stage of the project is to make this picture sharper. Some parts call for formal asymptotic statements; others for more refined definitions; still others for focused case studies. The aim of the paper is more modest: to provide a vocabulary, a collection of constructions, and a small-range computational record from which later theorem-centered work can proceed.

\section{Concluding remarks}

The paper has proposed a structural language for viewing the partition graph \(\Gn\) as a growing discrete geometric object. Its contribution is not a single extremal theorem, but the introduction of a coherent family of large-scale objects: antenna vertices, main chain, edges, boundary framework, self-conjugate axis, simplex layers, degree landscape, central region, and spine.

Taken together, these notions suggest that partition graphs possess a persistent and increasingly articulated morphology that is not captured either by the global homotopy type of the clique complex alone or by the local theory at a single vertex alone. The small computational atlas records the first visible stages of this morphology in the range \(1\le n\le 12\): the separation of framework and interior, the appearance of richer local regimes near the axial center, and the emergence of an internal skeletal construction.

The main message is that partition graphs form a mathematically natural family with a nontrivial large-scale geometry, and that describing this geometry is itself a natural mathematical problem.

\section*{Acknowledgements}

The author acknowledges the use of ChatGPT (OpenAI) for discussion, structural planning, and editorial assistance during the preparation of this manuscript. All mathematical statements, proofs, computations, and final wording were checked and approved by the author, who takes full responsibility for the contents of the paper.

\FloatBarrier
\appendix

\section{Small-range numerical tables}

The data in this appendix were obtained from the graph model \(\Gn\) for \(1\le n\le 12\) by explicit enumeration of partitions and elementary transfers. They are included as a transparent computational record for the small-range atlas, not as a substitute for later quantitative analysis. The same graph data were then used to compute degrees, local simplex dimensions, central regions, and the spine. In particular, the central-region counts \(|\Cn{1}|\) and \(|\Cn{2}|\) are computed using the boundary framework
\[
\Fn=\Mn\cup \Ln\cup \Rn,
\]
in accordance with the definitions adopted in the main text. For \(n=2\), where \(\SC_2=\varnothing\), we use the convention \(\dist_{\Gn}(\lambda,\varnothing)=\infty\), so the central regions are empty and the spine is empty as well.

\subsection{Basic graph counts}

For \(1\le n\le 12\), we record the number of vertices \(p(n)\), the number of edges \(|E(\Gn)|\), the number of self-conjugate vertices \(|\SC_n|\), and the size of the boundary framework \(|\Fn|\).

\begin{table}[htbp]
\centering
\caption{Basic graph counts for \(G_1,\dots,G_{12}\).}
\label{tab:basic-counts}
\begin{tabular}{ccccc}
\toprule
\(n\) & \(p(n)\) & \(|E(G_n)|\) & \(|\SC_n|\) & \(|F_n|\) \\
\midrule
1  & 1  & 0   & 1 & 1  \\
2  & 2  & 1   & 0 & 2  \\
3  & 3  & 2   & 1 & 3  \\
4  & 5  & 5   & 1 & 5  \\
5  & 7  & 9   & 1 & 7  \\
6  & 11 & 17  & 1 & 10 \\
7  & 15 & 28  & 1 & 11 \\
8  & 22 & 47  & 2 & 14 \\
9  & 30 & 73  & 2 & 15 \\
10 & 42 & 114 & 2 & 18 \\
11 & 56 & 170 & 2 & 19 \\
12 & 77 & 253 & 3 & 22 \\
\bottomrule
\end{tabular}
\end{table}

\subsection{Local-complexity maxima}

For \(1\le n\le 12\), we record the maximal degree and the maximal local simplex dimension.

\begin{table}[htbp]
\centering
\caption{Maximal degree and maximal local simplex dimension for \(G_1,\dots,G_{12}\).}
\label{tab:maxima}
\begin{tabular}{ccc}
\toprule
\(n\) & \(\max_{\lambda\in V(G_n)} \deg_{G_n}(\lambda)\) & \(\max_{\lambda\in V(G_n)} \dim_{\mathrm{loc}}(\lambda)\) \\
\midrule
1  & 0  & 0 \\
2  & 1  & 1 \\
3  & 2  & 1 \\
4  & 3  & 2 \\
5  & 4  & 2 \\
6  & 6  & 2 \\
7  & 7  & 3 \\
8  & 8  & 3 \\
9  & 8  & 3 \\
10 & 12 & 3 \\
11 & 13 & 4 \\
12 & 14 & 4 \\
\bottomrule
\end{tabular}
\end{table}

\subsection{Central-region data}

For \(1\le n\le 12\), we record the sizes of the narrow central region \(\mathcal C_n^{(1)}\), its first thickening \(\mathcal C_n^{(2)}\), the vertex set of the spine \(V(\Spine_n)\), and the full interior complement \(V(G_n)\setminus F_n\).

\begin{table}[htbp]
\centering
\caption{Central-region and spine data for \(G_1,\dots,G_{12}\): \(|\mathcal C_n^{(1)}|\) is the size of the narrow central region, \(|\mathcal C_n^{(2)}|\) the size of its first thickening, \(|V(\Spine_n)|\) the size of the spine, and \(|V(G_n)\setminus F_n|\) the total number of interior vertices.}
\label{tab:central-spine}
\begin{tabular}{ccccc}
\toprule
\(n\) & \(|\mathcal C_n^{(1)}|\) & \(|\mathcal C_n^{(2)}|\) & \(|V(\Spine_n)|\) & \(|V(G_n)\setminus F_n|\) \\
\midrule
1  & 0  & 0  & 1  & 0  \\
2  & 0  & 0  & 0  & 0  \\
3  & 0  & 0  & 1  & 0  \\
4  & 0  & 0  & 1  & 0  \\
5  & 0  & 0  & 1  & 0  \\
6  & 1  & 1  & 1  & 1  \\
7  & 2  & 4  & 1  & 4  \\
8  & 8  & 8  & 6  & 8  \\
9  & 5  & 15 & 12 & 15 \\
10 & 16 & 24 & 6  & 24 \\
11 & 11 & 27 & 12 & 37 \\
12 & 21 & 45 & 11 & 55 \\
\bottomrule
\end{tabular}
\end{table}

\subsection{Optional spectra}

If desired, one may also include the degree spectrum
\[
\Sigma_n^{\deg}:=\{\deg_{\Gn}(\lambda):\lambda\in V(\Gn)\}
\]
and the simplex spectrum
\[
\Sigma_n^{\mathrm{simp}}:=\{\locdim(\lambda):\lambda\in V(\Gn)\}
\]
for \(1\le n\le 12\), or at least for a selected subrange such as \(8\le n\le 12\). We omit these tables here in order to keep the appendix compact.

\section{Glossary of structural terms}

This appendix collects the main structural terms introduced in the paper. The entries are listed in the order of their first appearance rather than alphabetically.

\glossentry{Antenna vertex}{One of the two extremal hook endpoints \((n)\) and \((1^n)\); for \(n\ge 2\) these are precisely the two degree-one vertices of \(\Gn\), while for \(n=1\) they coincide. The terminology is motivated by the two outer antenna-like protrusions visible in the atlas of small partition graphs.}

\glossentry{Main chain}{The hook-partition path
\[
(n),(n-1,1),(n-2,1^2),\dots,(1^n),
\]
which is a canonical shortest path between the two antenna vertices.}

\glossentry{Left edge}{The path of two-part partitions branching from \((n-1,1)\).}

\glossentry{Right edge}{The conjugate path branching from \((2,1^{n-2})\).}

\glossentry{Boundary framework}{The union
\[
\Fn=\Mn\cup \Ln\cup \Rn
\]
of the main chain and the left and right edges.}

\glossentry{Interior vertex}{A vertex of \(\Gn\) not belonging to \(\Fn\).}

\glossentry{Self-conjugate axis}{The set
\[
\SC_n=\{\lambda\vdash n:\lambda=\lambda'\}.
\]}

\glossentry{Axial distance}{The graph distance from \(\lambda\) to the self-conjugate axis, with the convention \(\dist_{\Gn}(\lambda,\varnothing)=\infty\):
\[
\axdist(\lambda)=\dist_{\Gn}(\lambda,\SC_n).
\]}

\glossentry{Simplex layer}{The set
\[
\simpLayer_r(n)=\{\lambda:\locdim(\lambda)=r\}.
\]}

\glossentry{Degree layer}{The set
\[
\deglayer_d(n)=\{\lambda:\deg_{\Gn}(\lambda)=d\}.
\]}

\glossentry{Degree landscape}{The degree function viewed as a field on all vertices of \(\Gn\).}

\glossentry{Axial central region}{The set
\[
\Cn{r}=\{\lambda\in V(\Gn)\setminus \Fn:\axdist(\lambda)\le r\}.
\]}

\glossentry{Narrow central region}{The region \(\Cn{1}\).}

\glossentry{Spine}{The induced subgraph on the self-conjugate axis together with the shortest bridges between consecutive self-conjugate vertices in decreasing lexicographic order; if \(\SC_n=\varnothing\), the spine is empty.}

\glossentry{Morphogenesis}{The comparative study of how morphological regimes emerge across the family \((G_n)\) as \(n\) varies.}

\glossentry{Emergence threshold}{The least \(n\) for which a given structural feature first appears.}

\glossentry{Rectangular contour}{A provisional term for the still-unformalized extremal structure associated with rectangular partitions.}

\glossentry{Overlay / embedding}{A provisional term for possible natural placements of smaller partition graphs inside larger ones.}

\end{document}